\theoremstyle{plain}
\newtheorem{theorem}{Theorem}[section]
\newtheorem{lemma}[theorem]{Lemma}
\theoremstyle{remark}
\newtheorem*{question}{Question}
\begin{document}

\title{Torus knots obtained by negatively twisting torus knots}

\author[S. Lee]{Sangyop Lee}
\address{Department of Mathematics, Chung-Ang University,
84 Heukseok-ro, Dongjak-gu, Seoul 06974, Korea}
\email{sylee@cau.ac.kr}

\author{Thiago de Paiva}
\address[]{School of Mathematics, Monash University, VIC 3800, Australia }
\email[]{thiago.depaivasouza@monash.edu}

\thanks{Keywords: twisted torus knots, torus knots, Dehn surgery}
\thanks{The first author was supported by the National Research Foundation of Korea Grant funded by the Korean Government (NRF-2020R1F1A1A01074716).}

\begin{abstract}
Twisted torus knots are torus knots with some full twists added along some number of adjacent strands. There are infinitely many known examples of twisted torus knots which are actually torus knots. We give eight more infinite families of such twisted torus knots with a single negative twist.
\end{abstract}

\maketitle

\section{introduction}

Dean introduced twisted torus knots in his doctoral thesis \cite{Thesis} to study Seifert fibered spaces obtained by Dehn fillings. It has turned out that twisted torus knots have many interesting properties. Many hyperbolic knots in $S^3$ whose complements can be triangulated by a small number of ideal tetrahedra are found among twisted torus knots \cite{simplest, nextsimplest}.
Their volumes ~\cite{generalizedtwistedtoruslinks}, knot Floer homology \cite{homology}, bridge spectra ~\cite{Bridge}, and Heegaard splittings~\cite{Heegaard} have been studied.

Twisted torus knots are described by using four integer parameters and constructed from torus knots as follows. Let $p, q$ be coprime integers with $p > q \ge 1$. Consider a torus knot $T(p, q)$, which is embedded on an unknotted torus $F$ in the $3$-sphere $S^3$. Let $D$ be a disk intersecting $F$ in an arc with its boundary circle surrounding $r$ adjacent strands of $T(p, q)$, where $p + q \geq r \geq 1$. Let $s$ be a non-zero integer. After $(-1/s)$-surgery on the boundary circle of $D$, the $3$-sphere $S^3$ becomes $S^3$ again and the torus knot $T(p,q)$ becomes a new knot. This new knot is called a {\em twisted torus knot} $T(p, q, r, s)$. Here, the surgery coefficients are given in the usual way (see \cite{Rolfsen}).

The classification of the geometry of twisted torus knots has received special attention, mainly by Lee, the first author. He first determined the knot types of twisted torus knots $T(p, q, r, s)$ when $r$ is a multiple of $q$ by showing that $T(p, q, kq, s)$ is the $(q, p + k^2qs)$-cable knot on the torus knot $T(k, ks + 1)$ \cite{cable}. Then he determined the parameters $(p,q,r,s)$ for which $T(p,q,r,s)$ is a trivial knot \cite{unknotted}. For $| s | \geq 2$, under the assumption that $r$ is not a multiple of $q$, he proved that if $(p, q, r, s) = (2n \pm 1, n, n \pm 1, -2)$ for some positive integer $n$, then $T(p, q, r, s)$ is the torus knot $T(2n \pm 1, \mp 2)$~\cite{LeeTorusknotsobtained} and otherwise $T(p, q, r, s)$ is a hyperbolic knot \cite{hyperbolicity}. Therefore, the geometric types of twisted torus knots $T(p, q, r, s)$ with $| s | \geq 2$  are already known. However, the case  $| s | = 1$  has not yet been solved.

For $s = 1$, Lee determined twisted torus knots which are torus knots \cite{Positively} and Paiva, the second author, found an infinite family of satellite twisted torus knots \cite{P}.

In the remainder of the paper, we assume $s=-1$ and focus on the case that $T(p,q,r,s)$ becomes a torus knot. Guntel found the first family of such knots by showing that the twisted torus knots
$T((k +1)q - 1, q, q - 1, - 1)$ are the torus knots $T(kq + 1, q)$ where $q \geq 3$ and
$k \geq 2$ \cite{Guntel}. For twisted torus knots $T(p,q,r,-1)$ whose parameters $(p,q,r)$ are of the form $(p, q, p - kq)$ \cite{torusTwistedtorusknots} or satisfy $(q<)p<r\le p+q$ \cite{L}, Lee determined which of them are torus knots. In this paper, we find eight new infinite families of such twisted torus knots.

\begin{theorem} \label{t:main}
Let $m$ and $n$ be positive integers. Then we have the following:
  \begin{itemize}
    \item [(1)] $T(mn+m+1,mn+1,mn,-1)=T(mn+n+1,m+1)$;
    \item [(2)] $T(mn+m+1,mn+1,mn+m,-1)=T(mn+m-n,-m+1)$;
    \item [(3)] $T(mn+m+1,mn+1,mn+2,-1)=T(mn-n+1,m-1)$;
    \item [(4)] $T(mn+m-1,mn-1,mn+m-2,-1)=T(mn+m-n-2,-m+1)$;
    \item [(5)] $T(mn+m-1,mn-1,mn,-1)=T(mn-n-1,m-1)$;
    \item [(6)] $T(2n+1,n,2n-1,-1)=T(2n-3,-n+1)$;
    \item [(7)] $T(3n-1,n,n+1,-1)=T(2n-1,n-1)$; and
    \item [(8)] $T(3n+1,n,3n-1,-1)=T(3n-2,-2n+1)$.
  \end{itemize}
Here, we assume that $mn\ge 2$ for (4) and (5).
\end{theorem}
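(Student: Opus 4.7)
The plan is to attack Theorem~\ref{t:main} by a direct braid-theoretic computation. Realize $T(p,q,r,-1)$ as the closure of the braid
\[
\beta(p,q,r) \;=\; (\sigma_{1}\sigma_{2}\cdots\sigma_{p-1})^{q}\,(\sigma_{1}\sigma_{2}\cdots\sigma_{r-1})^{-r}
\]
on $p$ strands, the standard representative obtained by adjoining one negative full twist on the left-most $r$ strands to the torus braid for $T(p,q)$. For each of the eight families the goal is to transform $\beta(p,q,r)$, via braid relations, conjugations, and Markov (de)stabilizations, into a torus braid $(\sigma_{1}\cdots\sigma_{b-1})^{a}$ (or its mirror) whose closure is the claimed $T(a,b)$.

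The heart of the argument is that the negative full twist $\Delta_{r}^{-2}$ on the first $r$ strands partially cancels against the $(\sigma_{1}\cdots\sigma_{r-1})$-subwords buried inside $(\sigma_{1}\cdots\sigma_{p-1})^{q}$. Concretely, I would search for a conjugate of $\beta(p,q,r)$ in which some generator $\sigma_{p-1}^{\pm 1}$ appears exactly once, enabling a Markov destabilization to a braid on $p-1$ strands; iterating this $p-b$ times should leave a braid on $b$ strands whose exponent sum identifies it, up to a further conjugation, with $(\sigma_{1}\cdots\sigma_{b-1})^{a}$. Particular care is required for families (2), (4), (6), (8), whose targets have a negative second index: there the reduced braid should be a \emph{negative} torus braid, and the final identification will use the standard symmetries $T(a,b)=T(b,a)$ and $T(a,-b)=\overline{T(a,b)}$, which in turn require a quick coprimality check for each family.

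The main obstacle I foresee is purely combinatorial: for each of the eight families one must produce an explicit sequence of braid-word manipulations that exposes a destabilizable generator, and do so repeatedly. The two-parameter families (1)--(5) should yield to a common strategy, likely a single ``master'' reduction treating (1) and (2) in parallel and (3)--(5) following by replacing $q=mn+1$ with $q=mn-1$ and adjusting the position of the twist. Families (6), (7), and (8) appear to be sporadic one-parameter identities and will probably need three separate, though analogous, arguments. As a safeguard against sign or indexing errors I would first verify each identity at small test values, for example $m=n=2$ for (1)--(5) and $n=3$ for (6)--(8), by comparing Alexander polynomials, Seifert genera, or signatures on both sides before committing to the general braid computation.
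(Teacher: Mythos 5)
Your overall framework---writing $T(p,q,r,-1)$ as the closure of the torus braid on $p$ strands followed by a negative full twist on $r$ strands, then reducing by braid relations, conjugation, and Markov destabilization---is the same one the paper uses. But what you have written is a plan, not a proof: the entire mathematical content of the result lies in actually producing the reductions, and your proposal defers exactly that. The paper does not get by with ad hoc destabilization; it builds a toolkit of specific isotopies (splitting and recombining torus braids, ``generalized'' destabilizations that remove a whole family of parallel strands at once) and, crucially, two inductive lemmas in which a one-parameter family of braids $\beta_k$ is shown to have constant closure by exhibiting an explicit isotopy from $\beta_k$ to $\beta_{k-1}$; families (1)--(5) are each matched to an instance of one of these inductive families, and (6)--(8) get separate explicit reductions. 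Nothing in your outline guarantees that your iteration terminates in a torus braid, and without the recursive structure you have no mechanism for handling the parameters $m$ and $n$ uniformly.

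There is also a concrete logical error in your final step: you propose to identify the reduced braid on $b$ strands with $(\sigma_1\cdots\sigma_{b-1})^a$ ``by its exponent sum, up to conjugation.'' Exponent sum is a conjugacy invariant but nowhere near a complete one; a positive braid on $b$ strands with exponent sum $a(b-1)$ need not be conjugate to the torus braid, and its closure need not be $T(a,b)$. You must exhibit the torus braid explicitly, as the paper does. Two smaller gaps: several families degenerate for small parameters (e.g.\ $n=1$ in (4)--(8), or $(m,n)=(1,2)$ in (4) and (5)), where the inequalities underlying the general reduction fail; the paper disposes of these separately, sometimes by citing earlier classification results, and your plan does not anticipate them. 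Finally, numerical spot checks with Alexander polynomials or signatures are a useful safeguard but carry no weight in the proof itself.
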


For $q<p\leq 30$ and $r\le 29$, we used SnapPy to verify that if $T(p, q, r, -1)$ is a torus knot, then it is one of the knots in \cite[Theorems 1.1 and 1.2]{torusTwistedtorusknots}, \cite[Theorem 1.1]{L} and Theorem \ref{t:main} in this paper. We raise the following question.

\begin{question} Are there any other twisted torus knots $T(p, q,r, s)$ which are torus knots but not listed in \cite[Theorem 1.1 or Theorem 1.2]{torusTwistedtorusknots}, \cite[Theorem 1.1]{L} or Theorem \ref{t:main} in this paper?
\end{question}

\subsection{Acknowledgment} We appreciate the comments of Professor Jessica Purcell, and the second author is grateful to the Faculty of Science, Monash University, for the financial support.

\subsection{Dedication}The author Thiago de Paiva would like to dedicate this paper to the memory of his master's supervisor, Professor Roberto Callejas Bedregal. Roberto died of complications caused by COVID-19 during the writing of this paper. Roberto was an outstanding professor at the Federal University of Para\'iba. He had great influence at the beginning of the second author's career. He would like to thank him. He will be missed.

\section{Braids}

In this section, we prepare some braid isotopies. For this, we first simplify braid diagrams in the following way. By assigning a nonnegative integer $j$ to a single strand, we mean $j$ parallel strands without any twists. For positive integers $k$ and $\ell$, let $(k,\ell)$ denote the $(k,\ell)$-torus braid, i.e., $(k,\ell)$ is the braid $(\sigma_1\sigma_2\cdots \sigma_{k-1})^\ell$, where $\sigma_i$ is an elementary braid which is obtained from the trivial braid on $k$ strands by letting the $i$th strand cross under the $(i+1)$st strand (see \cite[Figure 2]{BB}). Let $\overline{(k,\ell)}$ denote the braid $(\sigma_{k-1}\sigma_{k-2}\cdots \sigma_1)^\ell$. Let $(k,-\ell)$ and $\overline{(k,-\ell)}$ denote the mirror images of $\overline{(k,\ell)}$ and $(k,\ell)$, respectively. Here, by the mirror image of a given braid $\beta$, we mean the braid obtained from $\beta$ by changing all crossings. 
Note that $(k,-\ell)$ is the braid $(\sigma_{k-1}^{-1}\sigma_{k-2}^{-1}\cdots\sigma_1^{-1})^\ell$ and $\overline{(k,-\ell)}$ is the braid $(\sigma_1^{-1}\sigma_2^{-1}\cdots \sigma_{k-1}^{-1})^\ell$. Let $\ell_k$ denote $\ell$ full twists on $k$ strands. Then $\ell_k=(k,\ell k)$ and $\overline{\ell_k}=\overline{(k,\ell k)}$. Note that $\overline{\ell_k}=\overline{(k,\ell k)}$ is the mirror image of $-\ell_k$. See Figure \ref{f:a}. It is easy to see that if $\ell_1$ and $\ell_2$ are integers, then $(k,\ell_1)\cdot(k,\ell_2)=(k,\ell_1+\ell_2)$ and $\overline{(k,\ell_1)}\cdot\overline{(k,\ell_2)}=\overline{(k,\ell_1+\ell_2)}$, where $\beta_1\cdot\beta_2$ is the braid obtained by stacking the braid $\beta_1$ on top of the braid $\beta_2$.

\begin{figure}[tbh]
\includegraphics{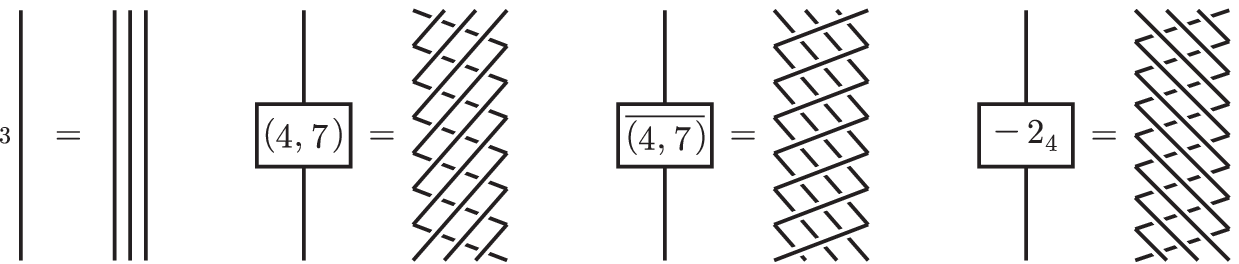}\caption{Braids} \label{f:a}
\end{figure}

\begin{lemma}\label{l:fst}
Let $k$ be a positive integer. Then the three braids in Figure \ref{f:e} are isotopic.
\end{lemma}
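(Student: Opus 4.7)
The plan is to verify the claimed isotopies by reading a braid word off each of the three diagrams in Figure~\ref{f:e} and then either transforming the words using the standard Artin braid relations or exhibiting an ambient isotopy in the solid cylinder enclosing the strands. The notation set up just before the lemma (the pieces $(k,\ell)$, $\overline{(k,\ell)}$, $\ell_k$, and $\overline{\ell_k}$, together with the concatenation identity $(k,\ell_1)\cdot(k,\ell_2)=(k,\ell_1+\ell_2)$) should let me express each diagram as a short product of such standard pieces, reducing the lemma to a purely algebraic identity in the braid group $B_k$.

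First I would label the $k$ endpoints at the top and bottom of each diagram and trace strands from top to bottom, recording a word in $\sigma_1,\dots,\sigma_{k-1}$ and their inverses. I expect the three diagrams to differ only by how a full twist (or a half twist together with a torus-braid piece) is drawn, so the key algebraic identity underlying the lemma is the classical equality
\[
(\sigma_1\sigma_2\cdots \sigma_{k-1})^k=(\sigma_{k-1}\sigma_{k-2}\cdots \sigma_1)^k,
\]
expressing that the full twist $1_k=(k,k)$ equals $\overline{1_k}=\overline{(k,k)}$ and lies in the center of $B_k$. This can be proved by induction on $k$: the base case $k=2$ is $\sigma_1^2=\sigma_1^2$, and the inductive step uses commutation $\sigma_i\sigma_j=\sigma_j\sigma_i$ for $|i-j|\ge 2$ together with the braid relation $\sigma_i\sigma_{i+1}\sigma_i=\sigma_{i+1}\sigma_i\sigma_{i+1}$ to slide the outer generator past the remaining $k-1$ strands.

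A geometric alternative, which I find cleaner and would present as the main argument, is to view a full twist on $k$ parallel strands as the braid swept out when the bottom disk containing the $k$ endpoints is rotated by $2\pi$ inside the solid cylinder; since this rotation is an ambient isotopy of the cylinder fixing its boundary setwise, the isotopy class of the resulting braid is independent of the direction in which the strands are made to cross, and of how the full twist is split between a ``top'' and ``bottom'' portion. Combining this with centrality of $1_k$ then lets me freely slide the full twist through any torus-braid piece $(k,\ell)$, which is what is presumably needed to equate the third diagram with the first two.

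The main obstacle is bookkeeping: matching the pictures in Figure~\ref{f:e} to unambiguous braid words (in particular, getting the bar vs.~non-bar conventions and the signs of the crossings right), and then keeping track of the permutation of endpoints so that the asserted isotopy respects the top and bottom endpoint orderings. Once the diagrams are correctly translated into words, the equality reduces to the centrality of the full twist together with a short induction using the Artin relations, and no subtler argument should be required.
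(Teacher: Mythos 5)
Your plan is sound, but it is a genuinely different route from the paper, whose entire proof is the single line ``This follows immediately from \cite[Lemma 2.3]{L}'' --- i.e.\ the authors simply outsource the isotopy to an earlier paper of Lee and give no argument at all. What Figure \ref{f:e} depicts (as one can reconstruct from how the lemma is used in Lemmas \ref{l:mrm} and \ref{l:idc}) is the standard decomposition of the full twist $1_{k+1}$ as $1_k$ on a sub-family of $k$ strands together with the remaining strand encircling that family, drawn in different positions; your geometric argument --- viewing $1_{k+1}$ as the braid swept out by a $2\pi$ rotation of the bottom disk, and splitting that rotation between a sub-disk containing $k$ of the punctures and the motion of the leftover puncture around it --- is exactly the right mechanism to establish this and would yield a self-contained proof, which is arguably an improvement over a bare citation. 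Two cautions. First, the identity you single out as ``key,'' namely $(\sigma_1\cdots\sigma_{k-1})^k=(\sigma_{k-1}\cdots\sigma_1)^k$ (i.e.\ $1_k=\overline{1_k}$), is not quite the content of this lemma: in the paper that identity is Lemma \ref{l:mrm}, which is \emph{deduced from} Lemma \ref{l:fst} by induction, so if you invoke it here you must prove it independently by the Artin relations as you sketch, or you risk a circularity relative to the paper's logical ordering. Second, the relevant algebraic identity for the figure is rather $\Delta_{k+1}^2=\Delta_k^2\cdot(\sigma_k\cdots\sigma_1)(\sigma_1\cdots\sigma_k)$ (full twist equals smaller full twist times the pure-braid generator in which one strand encircles the rest); your disk-rotation picture proves precisely this, so the proof goes through once the bookkeeping against the actual diagrams is done.
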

\begin{figure}[tbh]
\includegraphics{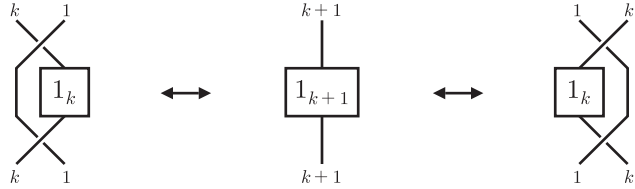}\caption{Isotopic braids} \label{f:e}
\end{figure}
\begin{proof}
This follows immediately from \cite[Lemma 2.3]{L}.
\end{proof}

\begin{lemma}\label{l:mrm}
Let $k,\ell$ be positive integers. Then the mirror image of the braid $\ell_k$ is isotopic to the braid $-\ell_k$.
\end{lemma}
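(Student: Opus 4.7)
The plan is to unwind both sides in terms of elementary-braid words and reduce the statement to the single-full-twist case, which should fall out of Lemma~\ref{l:fst}. By definition, $\ell_k = (k,\ell k) = (\sigma_1\sigma_2\cdots\sigma_{k-1})^{\ell k}$, so taking the literal mirror image (flipping every crossing) produces the word $(\sigma_1^{-1}\sigma_2^{-1}\cdots\sigma_{k-1}^{-1})^{\ell k}$, which is exactly $\overline{(k,-\ell k)}$. On the other hand, $-\ell_k$ was defined to be $(k,-\ell k) = (\sigma_{k-1}^{-1}\sigma_{k-2}^{-1}\cdots\sigma_1^{-1})^{\ell k}$. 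Thus the lemma will follow once we show
\[
\overline{(k,-\ell k)} \;\simeq\; (k,-\ell k),
\]
or equivalently, after mirroring both sides, once we show $(k,\ell k) \simeq \overline{(k,\ell k)}$, i.e.\ $\ell_k \simeq \overline{\ell_k}$.

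Next I would handle the case $\ell=1$ directly using Lemma~\ref{l:fst}: the three isotopic braids displayed in Figure~\ref{f:e} presumably include the two drawings $(k,k)$ and $\overline{(k,k)}$ of a single full twist on $k$ strands, so Lemma~\ref{l:fst} gives $1_k \simeq \overline{1_k}$. Having this, I would bootstrap to general positive $\ell$ by stacking: using the identity $(k,\ell_1)\cdot(k,\ell_2)=(k,\ell_1+\ell_2)$ (and its overlined analogue) stated just before Lemma~\ref{l:fst}, the braid $\ell_k = (k,\ell k)$ is the vertical concatenation of $\ell$ copies of $1_k$, and similarly $\overline{\ell_k}$ is the concatenation of $\ell$ copies of $\overline{1_k}$. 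Applying the $\ell=1$ isotopy to each factor yields $\ell_k \simeq \overline{\ell_k}$.

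Finally, taking the mirror image of both sides of $\ell_k \simeq \overline{\ell_k}$ (mirroring preserves isotopy) gives that the mirror of $\ell_k$ is isotopic to the mirror of $\overline{\ell_k} = \overline{(k,\ell k)}$, which by the definition immediately before Lemma~\ref{l:fst} is exactly $(k,-\ell k) = -\ell_k$. That is the desired conclusion.

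The only real obstacle is matching the picture in Figure~\ref{f:e} to the algebraic identity we need; once Lemma~\ref{l:fst} is accepted as giving $1_k \simeq \overline{1_k}$, the rest is a formal bookkeeping exercise in braid-word multiplication and mirroring, with no subtle geometric move required.
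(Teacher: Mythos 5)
Your reduction is exactly the paper's: mirroring turns the claim into $\ell_k\simeq\overline{\ell_k}$, and stacking reduces that to the single full twist, so the whole lemma hinges on the one identity $1_k\simeq\overline{1_k}$. The gap is that you dispose of this identity by guessing that Figure \ref{f:e} displays $(k,k)$ and $\overline{(k,k)}$ side by side, and that guess is wrong. Lemma \ref{l:fst} (imported from \cite[Lemma 2.3]{L}) is a decomposition statement on $k+1$ strands: its left and right braids each consist of a $1_k$ block on $k$ strands together with one extra strand encircling that block (in the two symmetric ways), and the middle braid is $1_{k+1}$. You can see this from how the paper uses it: in the proof of Lemma \ref{l:mrm} it speaks of ``the braid on the left of Figure \ref{f:e} with $1_k$ \emph{replaced by} $\overline{1_k}$,'' and in Lemma \ref{l:idc} applying Lemma \ref{l:fst} \emph{produces} a $1_{m+1}$ that is then merged with another full twist. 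None of the three pictured braids is $\overline{1_k}$, so Lemma \ref{l:fst} does not hand you $1_k\simeq\overline{1_k}$ in one step.

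What the paper does instead is prove $1_k\simeq\overline{1_k}$ by induction on the number of strands $k$: the cases $k=1,2$ are immediate, and for the step one observes that $\overline{1_{k+1}}$ equals the left braid of Figure \ref{f:e} with its $1_k$ block replaced by $\overline{1_k}$; the induction hypothesis swaps that block back to $1_k$, and then Lemma \ref{l:fst} identifies the result with $1_{k+1}$. So the ``only real obstacle'' you flag at the end is in fact the entire mathematical content of the lemma, and it needs this induction (or some other argument, e.g.\ centrality of the full twist $\Delta_k^2$ in the braid group) rather than a direct citation of Lemma \ref{l:fst}. The formal bookkeeping surrounding it in your write-up is fine and agrees with the paper.
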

\begin{proof}
It is enough to prove that the mirror image of $-1_k$, which is $\overline{1_k}$, is isotopic to $1_k$. We prove this by an induction on $k$. When $k=1$ or $2$, this is obviously true. Suppose that $\overline{1_k}$ is isotopic to $1_k$. One easily sees that $\overline{1_{k+1}}$ is isotopic to the braid on the left of Figure \ref{f:e} with $1_k$ replaced by $\overline{1_k}$. It follows from the induction hypothesis and Lemma \ref{l:fst} that $\overline{1_{k+1}}$ is isotopic to $1_{k+1}$.
\end{proof}

\begin{lemma}\label{l:twb}
Let $k$ be a positive integer. Then the two braids in Figure \ref{f:g}(x) are the same for each $x=a,b$.
\end{lemma}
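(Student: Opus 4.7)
My plan is to verify each of the two identities in Figure \ref{f:g}(a) and (b) by first translating the pictured braids into explicit words in the elementary generators $\sigma_i$, and then reducing both sides to a common form using Lemma \ref{l:fst}, Lemma \ref{l:mrm}, and the additive rules $(k,\ell_1)\cdot(k,\ell_2)=(k,\ell_1+\ell_2)$ and $\overline{(k,\ell_1)}\cdot\overline{(k,\ell_2)}=\overline{(k,\ell_1+\ell_2)}$ recorded in the paragraph introducing the braid notation. Since the statement asserts that the two braids in each figure are literally \emph{the same}, the goal is not merely an isotopy up to Markov moves but an equality in the braid group on the nose.

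For part (a), I expect the diagram to involve only positive crossings, with a full twist $1_k$ placed next to a single strand or a torus-braid block. The strategy is to apply Lemma \ref{l:fst} to move the $1_k$ past the adjacent strand, converting the picture into a word in which two neighboring torus braids can be combined by the additive rule. After this combination the resulting word should coincide with the right-hand picture, verifying the claim.

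For part (b), I anticipate the mirror situation, in which the positive full twist is replaced by $\overline{1_k}$ (equivalently $-1_k$). The cleanest route is to apply Lemma \ref{l:mrm} to rewrite the negative twist as the mirror image of $1_k$, take the mirror image of the whole diagram to reduce to the positive case, and then invoke part (a) before mirroring back. Equivalently, one can redo the manipulation of part (a) with all generators replaced by their inverses, since Lemma \ref{l:fst} is stated for any positive integer $k$ and its mirror form follows at once from Lemma \ref{l:mrm}.

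The main obstacle is not algebraic but pictorial: one must pin down, for each of the four subfigures, exactly which bundle of strands carries the full twist and which single strand is being isotoped across it, so that the hypothesis of Lemma \ref{l:fst} applies without ambiguity. Once this bookkeeping is fixed, each identity reduces to a short sequence of moves composed from the three tools listed above, and the equality of the two pictured braids follows directly.
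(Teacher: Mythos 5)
Your proposal does not match the content of this lemma, and the machinery you plan to deploy is not the right tool for it. Figure \ref{f:g} (captioned ``Braids $(k+1,1)$ and $(k+1,-1)$'') does not involve full twists $1_k$ at all: a full twist on $k$ strands is the braid $(k,k)$, whereas the braids compared here are the single powers $(k+1,1)=\sigma_1\sigma_2\cdots\sigma_k$ and $(k+1,-1)=\sigma_k^{-1}\sigma_{k-1}^{-1}\cdots\sigma_1^{-1}$. Each subfigure simply juxtaposes the generator-word picture of such a braid with its condensed picture in the paper's shorthand, namely a single strand crossing a band of $k$ parallel strands (recall that labelling a strand with a nonnegative integer means that many parallel strands). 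These two pictures denote literally the same braid by the definitions given at the beginning of Section 2; the paper's entire proof is that one sentence, and no appeal to Lemma \ref{l:fst}, Lemma \ref{l:mrm}, or the additive composition rules is needed.

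The concrete gap in your write-up is that you never pin down the two braids being compared: you only describe what you ``expect'' and ``anticipate'' the figure to contain, and the guess (a full twist $1_k$ to be slid past an adjacent strand via Lemma \ref{l:fst}) is wrong. Since the lemma asserts equality of braids on the nose, any proof must begin by writing down both words, and once that is done the equality is immediate, leaving your proposed reductions with nothing to do. Indeed Lemma \ref{l:fst} concerns a genuinely different configuration (a full twist together with a strand passing across it) and is an isotopy statement used elsewhere in the paper alongside the present lemma --- for instance in the proofs of Lemma \ref{l:tbt} and Lemma \ref{l:idc} --- rather than something from which the present lemma is derived.
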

\begin{figure}[tbh]
\includegraphics{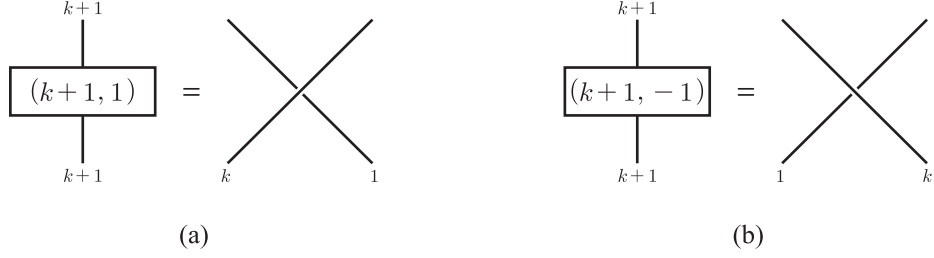}\caption{Braids $(k+1,1)$ and $(k+1,-1)$} \label{f:g}
\end{figure}
\begin{proof}
This follows immediately from the definitions at the beginning of this section.
\end{proof}

\begin{lemma}\label{l:dst}
Let $\beta,\beta'$ be braids on $j+k$ strands, where $j,k$ are nonnegative integers. Then the links obtained by closing the braids in Figure \ref{f:d}(x) are equivalent for each $x=a,b,c,d$.
\end{lemma}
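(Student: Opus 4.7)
The plan is to verify each of the four equivalences in Figure~\ref{f:d} by applying the Markov conjugation move to the braid closure. Recall that for any braids $\alpha,\gamma$ on the same number of strands we have $\widehat{\alpha\cdot\gamma}=\widehat{\gamma\cdot\alpha}$, where $\widehat{\,\cdot\,}$ denotes the closure. The two braids in each of the subfigures $(a)$--$(d)$ should differ only by the relative placement of the blocks $\beta$ and $\beta'$ (and perhaps by the position of a fixed torus-braid block such as $(j+k,\pm 1)$), so each equivalence ought to come from cyclically sliding one of these blocks around the closure.

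First I would read off each picture as a word in the braid group on $j+k$ strands, separating the contributions of $\beta$ (acting on $j$ strands and as the identity on the remaining $k$), of $\beta'$ (acting on $k$ strands and as the identity on the remaining $j$), and of the fixed interconnecting pieces. The crucial point is that, because $\beta$ and $\beta'$ act on disjoint sets of strands, the corresponding elements of the braid group on $j+k$ strands commute with each other. Consequently, closures of the form $\widehat{\beta\cdot\tau\cdot\beta'\cdot\tau'}$ and $\widehat{\beta'\cdot\tau\cdot\beta\cdot\tau'}$ agree after cyclically moving one block past the other using $\widehat{\alpha\gamma}=\widehat{\gamma\alpha}$.

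Where needed, the previously established identities help realign the fixed pieces: Lemma~\ref{l:twb} identifies the two natural ways to write a single positive or negative $(k+1,\pm 1)$-torus braid, Lemma~\ref{l:fst} rewrites a full-twist block next to an extra strand, Lemma~\ref{l:mrm} converts $\overline{\ell_k}$ into $-\ell_k$ under mirroring, and the elementary relation $(k,\ell_1)\cdot(k,\ell_2)=(k,\ell_1+\ell_2)$ collapses or splits torus-braid factors after a cyclic slide. Applying these case by case to $x=a,b,c,d$ then matches the two pictures in each subfigure as closed braids, hence as links.

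The main obstacle, I expect, is purely bookkeeping rather than conceptual: in each of the four subfigures one must keep track carefully of which subset of the $j+k$ strands carries $\beta$ versus $\beta'$, the direction (positive or negative) of each torus-braid block that $\beta$ or $\beta'$ must be slid past, and how the closure identifies the top and bottom endpoints. Once these are set up correctly, the equivalence reduces to a single cyclic conjugation combined with the routine braid identities above.
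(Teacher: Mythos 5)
Your proposal rests on a misreading of both the statement and the move it describes. The hypothesis is that $\beta$ and $\beta'$ are each braids on all $j+k$ strands, not that $\beta$ acts on $j$ strands and $\beta'$ on a disjoint set of $k$ strands; so the commutation of the two blocks, which is the engine of your argument, is not available. More fundamentally, the move encoded in Figure \ref{f:d} is a \emph{(de)stabilization}, not a conjugation: the paper explicitly names the passage from the left braid to the right one a ``generalized destabilization,'' and everywhere it is invoked (in the proofs of Lemmas \ref{l:pqr}, \ref{l:idc}, \ref{l:cun} and in Section 3) it serves to delete a band of parallel strands that wraps exactly once around the rest of the closed braid, thereby reducing the braid index. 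Cyclic sliding of blocks around the closure is precisely the content of the separate Lemma \ref{l:cjg} (Markov conjugation); conjugation preserves both the number of strands and the exponent sum of the braid word, so it cannot relate the two sides of a destabilization, whose braid indices and writhes differ. This is exactly why the paper records Lemma \ref{l:cjg} and Lemma \ref{l:dst} as two independent tools.

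The paper's own proof is a one-line citation of \cite[Lemma 2.5]{L}, so no self-contained argument is given there either; but if you want one, the correct route is the generalization of the Markov destabilization: when a band of $k$ parallel strands crosses the remaining strands exactly once (over or under, on either side --- these options account for the four cases $x=a,b,c,d$) and is otherwise trivial, the strands of that band can be removed one at a time by ordinary Markov destabilizations (an induction on the width of the band, in the spirit of the induction in Lemma \ref{l:mrm}), or all at once by an isotopy of the closed braid in $S^3$ pulling the band across the braid axis. Conjugation together with the algebraic identities you list will not produce this.
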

\begin{figure}[tbh]
\includegraphics{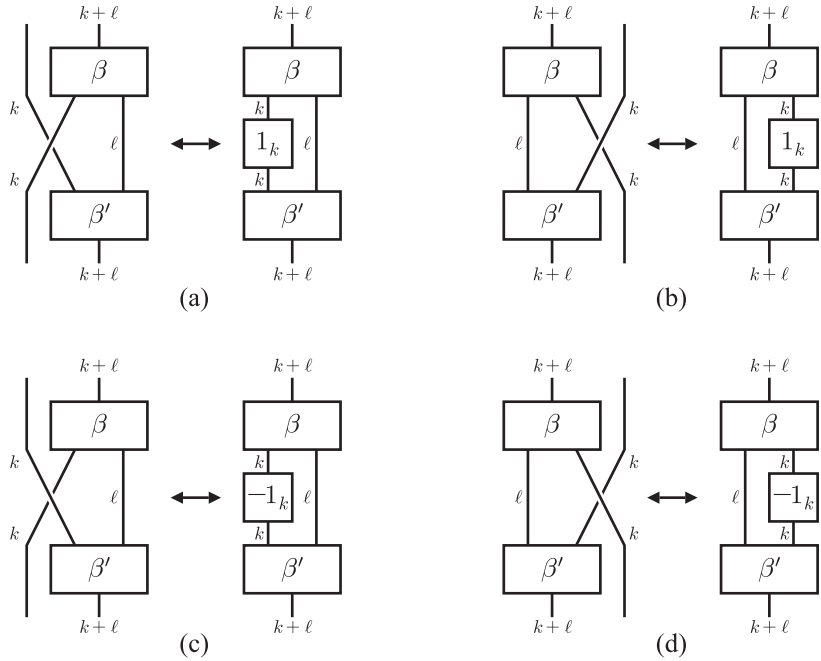}\caption{Generalized destabilization/stabilization} \label{f:d}
\end{figure}
\begin{proof}
This is \cite[Lemma 2.5]{L}.
\end{proof}
For each $x=a,b,c,d$, we call the move from the left braid in Figure \ref{f:d}(x) to the right a {\em generalized destabilization} and the move from the right to the left a {\em generalized stabilization}.

\begin{lemma}\label{l:cjg}
Let $\beta_1,\beta_2$ be braids on $k$ strands. Then the closures of $\beta_1\cdot\beta_2$ and $\beta_2\cdot\beta_1$ are equivalent knots or links.
\end{lemma}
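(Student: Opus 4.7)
The plan is to use the standard geometric picture of braid closure in a solid torus and perform a cyclic rotation of the stacked braid. Regard the composite braid $\beta_1\cdot\beta_2$ as sitting inside a cylindrical box in $S^3$, with $\beta_1$ occupying the upper half and $\beta_2$ the lower half, and with the closure obtained by attaching $k$ disjoint ``closure arcs'' running from the $i$th top endpoint around the outside of the box to the $i$th bottom endpoint. The whole link lies in a solid torus neighbourhood of the braid axis, and the closure arcs form an unknotted, untwisted system of parallel arcs in the complement of the box.

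First I would isolate the $\beta_2$ block inside a small sub-box $B$ at the bottom of the cylinder. Because $B$ meets the rest of the diagram only in its top face (along $k$ points) and in the $k$ endpoints on the bottom boundary of the original box, I can carry $B$ by an ambient isotopy of $S^3$ along the $k$ closure arcs, sweeping it around the exterior of the braid axis and reinserting it at the top of the cylinder above $\beta_1$. Throughout this motion the $k$ strands entering and leaving $B$ remain parallel, so $B$ itself is not deformed internally: no crossings inside $\beta_2$ are created or destroyed, and no new crossings are introduced between $\beta_2$ and $\beta_1$. After the move, the braid box reads $\beta_2$ on top of $\beta_1$, and the closure arcs still form the same unknotted parallel system connecting top endpoints to bottom endpoints. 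The resulting closed link is therefore the closure of $\beta_2\cdot\beta_1$.

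Equivalently, the same conclusion can be phrased algebraically: $\beta_1\cdot\beta_2 = \beta_1\cdot(\beta_2\cdot\beta_1)\cdot\beta_1^{-1}$, so the two braids are conjugate in the $k$-strand braid group, and the geometric move above is exactly the verification that closure is invariant under conjugation (the Markov conjugation move).

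The only delicate point I expect is making the sliding isotopy rigorous, namely checking that carrying the sub-box $B$ around the closure arcs neither twists $B$ relative to the framing of the arcs nor accidentally pulls strands of $\beta_1$ along with it. This is handled by noting that the closure arcs are isotopic (rel endpoints) to $k$ straight parallel arcs in a meridian disk of the solid torus neighbourhood, so the transport of $B$ can be chosen to be a rigid rotation about the braid axis. Given this, no extra full twist is introduced and the argument goes through cleanly.
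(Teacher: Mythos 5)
Your proof is correct and takes essentially the same route as the paper: the paper simply observes that $\beta_1\cdot\beta_2$ and $\beta_2\cdot\beta_1$ are conjugate and cites the well-known fact that conjugate braids have equivalent closures, which is exactly the sliding-the-block-around-the-axis argument you spell out in detail.
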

\begin{proof}
It is well known that conjugate braids yield equivalent knots or links. Also, for two group elements $a$ and $b$, $ab$ is conjugate to $ba(=a^{-1}(ab)a)$.
\end{proof}

\begin{lemma}\label{l:tbt}
Let $j,k,\ell$ be positive integers with $k\le \ell<j$. Then the braid in the center of Figure \ref{f:b}(x) is isotopic to any of the upper braids in the figure for each $x=a,b$. In particular, if $k=\ell$, then the central braid is isotopic to any of the lower braids. Also, if $k=1$, then the five braids in Figure \ref{f:b}(y) are isotopic for each $y=c,d$.
\end{lemma}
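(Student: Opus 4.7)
The plan is to prove the central-to-upper equivalences in Figure \ref{f:b}(x) by induction on $k$, using the earlier braid moves as the building blocks. The hypothesis $k \le \ell < j$ guarantees that the $k$-strand upper block is contained in a sub-block of width $\ell$ sitting strictly inside a braid on $j$ strands, so there is always at least one non-participating strand available to serve as a track for a destabilization/stabilization move.

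For the base case $k=1$, a single strand of the upper block can be pulled through the torus portion by first using Lemma \ref{l:twb} to identify the relevant sub-braid as $(\ell+1,\pm 1)$, and then applying Lemma \ref{l:fst} to perform the sliding move. For the inductive step, I would peel off the outermost of the $k$ strands in the upper block, use Lemma \ref{l:fst} (together with Lemma \ref{l:twb} if the twist sign must be adjusted) to exchange it with the torus sub-block, and then invoke the induction hypothesis on the remaining $k-1$ strands to finish. The assumption $\ell < j$ is precisely what is needed at each step so that, after the exchange, there is still a free strand on which to carry out the next destabilization from Lemma \ref{l:dst}.

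For the special case $k=\ell$, the upper block now exactly covers the torus sub-block, which opens one additional move: the two $k$-strand blocks can be fused, either by the composition identity $(k,\ell_1)\cdot(k,\ell_2)=(k,\ell_1+\ell_2)$ recorded at the start of the section or, after a conjugation (Lemma \ref{l:cjg}), by stacking them directly. Following this fusion with one or two generalized destabilizations (Lemma \ref{l:dst}) removes the extra strands of the $j$-strand block and produces the lower braids.

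For the $k=1$ subclaim in Figure \ref{f:b}(y), I would exhibit a short chain of isotopies connecting the five braids in order, using Lemma \ref{l:twb} to switch between the $(\ell+1,1)$ and $(\ell+1,-1)$ presentations, Lemma \ref{l:mrm} to reconcile opposite orientations of full-twist blocks, and Lemma \ref{l:fst} for each strand slide. The main obstacle I anticipate is not any single step but bookkeeping: tracking the direction of every torus braid, the sign of every twist block, and which strand is being (de)stabilized at each stage, so that the figure-level moves match the symbolic ones. Once the correct move is picked at each step, the verifications themselves are routine consequences of Lemmas \ref{l:fst}--\ref{l:cjg}.
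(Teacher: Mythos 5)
There is a genuine gap in your approach: the lemma asserts that the braids in Figure \ref{f:b} are \emph{isotopic as braids} (all on the same $j$ strands), but several of the moves you invoke --- generalized destabilization (Lemma \ref{l:dst}) and conjugation (Lemma \ref{l:cjg}) --- only preserve the link type of the \emph{closure}. Destabilization changes the number of strands, and conjugation only gives equivalence of closures, so neither can be a step in an isotopy between two braids on $j$ strands. This is not a pedantic distinction here: Lemma \ref{l:tbt} is applied throughout the paper to \emph{sub-braids} sitting inside larger braid words (e.g.\ in the proof of Lemma \ref{l:pqr}, the isotopy of Figure \ref{f:b}(a) is applied to the torus-braid portion of the braid for $T(p,q,r,-1)$ before further moves are made). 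Substituting one sub-braid for another inside a larger word is only legitimate if the two are genuinely isotopic; closure-equivalence would not justify it. The same objection applies to your treatment of the $k=\ell$ case, where you propose to ``remove the extra strands of the $j$-strand block'' by destabilization --- the lower braids in the figure are still braids on $j$ strands, and the paper obtains them simply as the specialization $k=\ell$ of the general statement, with no fusion or strand removal.

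The paper's own proof is a direct picture argument: Figure \ref{f:c} exhibits the isotopy (sliding the $k$-strand block around the $(\ell,\cdot)$ torus portion) for a representative case $(j,k,\ell)=(9,3,5)$, the general case being identical; the $k=\ell$ and $k=1$ statements then follow immediately as special cases, with Lemma \ref{l:twb} used only to identify the first/last two braids in Figure \ref{f:b}(c),(d). Your underlying geometric idea --- peeling off strands one at a time and tracking them past the torus sub-braid --- is compatible with that picture and could be made into a correct inductive proof, but only if every step is a genuine braid isotopy (planar isotopy of the diagram, Lemma \ref{l:fst}, Lemma \ref{l:twb}, Lemma \ref{l:mrm}); the steps that appeal to Lemmas \ref{l:dst} and \ref{l:cjg} must be removed or replaced.
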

\begin{figure}[tbh]
\includegraphics{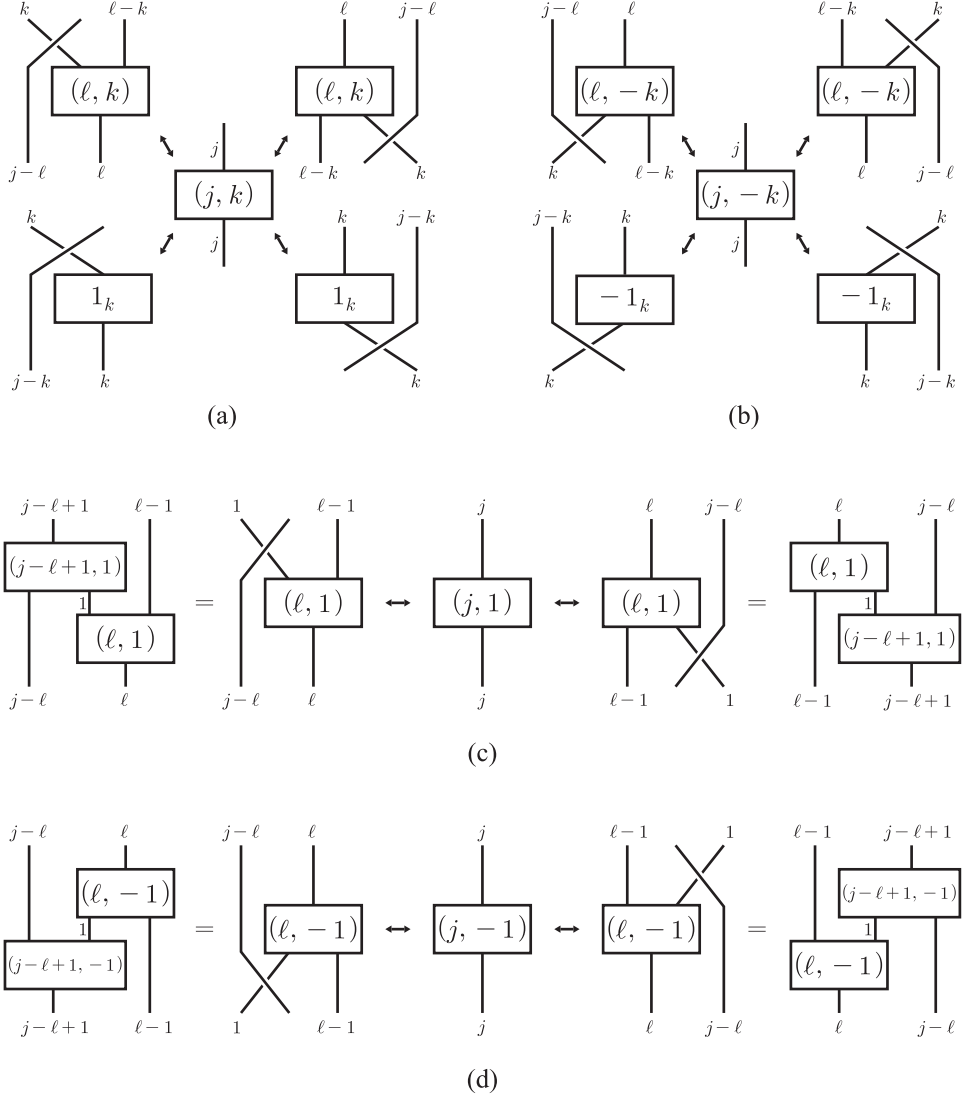}\caption{Braid isotopies} \label{f:b}
\end{figure}
\begin{proof}
The central and two upper braids in Figure \ref{f:b}(a) are isotopic as shown in Figure \ref{f:c}, which illustrates the case that $(j,k,\ell)=(9,3,5)$. Similarly for the braids in Figure \ref{f:b}(b). The last two statements of the lemma follow immediately from the general case. The first/last two braids in Figure \ref{f:b}(c),(d) are the same by Lemma \ref{l:twb}.
\begin{figure}[tbh]
\includegraphics{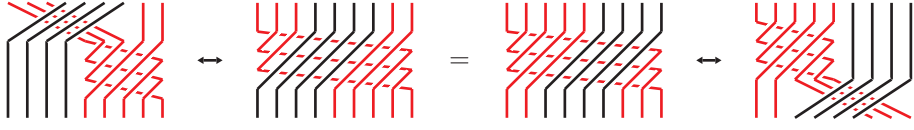}\caption{Braid isotopies} \label{f:c}
\end{figure}
\end{proof}

\begin{lemma}\label{l:pqr}
Let $p,q,r$ be positive integers such that $p\ge r\ge q,r+q\ge p$, and $p,q$ are coprime. Then the following hold.
\begin{itemize}
  \item [(1)] If $2q>p$, then the twisted torus knot $T(p,q,r,-1)$ is obtained by closing the braid in Figure \ref{f:f}(i).
  \item [(2)] If $p\ge 2q\ge r$, then the twisted torus knot $T(p,q,r,-1)$ is obtained by closing the braid in Figure \ref{f:f}(l).
\end{itemize}
\end{lemma}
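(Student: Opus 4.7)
The plan is to start from the defining braid representation of $T(p,q,r,-1)$ and reshape it into the pictures of Figure \ref{f:f}(i) and (l) through a sequence of braid isotopies drawn from the preceding lemmas. By construction, $T(p,q,r,-1)$ is the closure of a braid on $p$ strands consisting of the torus braid $(p,q)=(\sigma_1\sigma_2\cdots\sigma_{p-1})^q$ stacked with a single negative full twist on a block of $r$ adjacent strands. Thus the task reduces to exhibiting this braid, via known isotopies, in the shape drawn in Figure \ref{f:f}.

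My first step would be to draw the standard braid explicitly and then rewrite the torus braid $(p,q)$ using Lemma \ref{l:tbt}, which is the workhorse of this section. For case (1) with $2q>p$, the inequality $p-q<q$ puts us in the regime $k<\ell$ required by Lemma \ref{l:tbt}: taking $j=p$, $k=p-q$, $\ell=q$ (or a similar identification), the torus braid $(p,q)$ decomposes into smaller torus-braid blocks arranged around a central region of $r$ strands. For case (2) with $p\ge 2q\ge r$, the roles of $q$ and $p-q$ swap and a different identification within Lemma \ref{l:tbt} applies; the extra hypothesis $2q\ge r$ ensures that the $-1$-twist block still fits cleanly inside the sub-braid produced by this decomposition.

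Once the torus braid has been split in this way, I would use Lemma \ref{l:cjg} to cyclically shift the braid so that the negative full twist sits adjacent to one of the sub-braid blocks, and then apply Lemmas \ref{l:fst}, \ref{l:twb}, and \ref{l:mrm} to simplify the resulting configuration into the specific picture drawn in Figure \ref{f:f}(i) or (l). The conditions $p\ge r\ge q$ and $r+q\ge p$ guarantee throughout that every application of Lemma \ref{l:tbt} lies in its valid range $k\le\ell<j$, and that the $r$-strand twisting region overlaps the decomposed torus braid in the needed way.

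The main obstacle is bookkeeping rather than geometric insight: I need to keep careful track of the number of strands in each sub-braid, the sign of each crossing, and which of the sub-braids appear as mirror images (for which Lemma \ref{l:mrm} is the key tool), so that the intermediate braids remain valid and the final picture matches Figure \ref{f:f}(i) and (l) exactly. Since Lemma \ref{l:tbt} already packages the nontrivial braid isotopies, the argument should reduce to a verified sequence of applications of the preceding lemmas, split according to whether we are in the regime $2q>p$ or $p\ge 2q\ge r$.
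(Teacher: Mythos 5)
Your overall strategy -- start from the defining $p$-strand braid (torus braid $(p,q)$ stacked with a negative full twist on $r$ strands) and reshape it using the isotopies of Lemma \ref{l:tbt} -- is the same starting point as the paper's proof, but as written the plan cannot reach Figure \ref{f:f}(i) or (l), for a concrete reason: every tool you list (Lemmas \ref{l:fst}, \ref{l:twb}, \ref{l:mrm}, \ref{l:cjg}, \ref{l:tbt}) is either a braid isotopy or a conjugation, and all of these preserve the braid index. The target braids live on strictly fewer strands: the strand-group labels of Figure \ref{f:f}(i) are $r-q$, $2q-p$, $p-r$, which sum to $q<p$. The paper gets there by interleaving \emph{generalized destabilizations} (Lemma \ref{l:dst}) with the isotopies -- one in the common part of the argument (step (c) to (d), removing $p-r$ strands after splitting the $q$-strand family into blocks of sizes $p-r$ and $r+q-p$), and one more in each of the two cases (after splitting $r+q-p$ into $r-q$ and $2q-p$ when $2q>p$, respectively splitting $p-r$ into $p-2q$ and $2q-r$ when $p\ge 2q\ge r$). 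These destabilizations are exactly where the hypotheses $p\ge r\ge q$, $r+q\ge p$, and the case conditions are consumed, since they guarantee the relevant block sizes are nonnegative. Omitting Lemma \ref{l:dst} is not bookkeeping; it is the missing idea.

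A secondary issue: your proposed identification of parameters in Lemma \ref{l:tbt} for case (1), namely $(j,k,\ell)=(p,p-q,q)$, does not involve $r$ at all, so it cannot align the decomposition of $(p,q)$ with the $r$-strand twist region. The paper applies the upper left isotopy of Figure \ref{f:b}(a) with $(j,k,\ell)=(p,q,r)$, which is legitimate precisely because $q\le r\le p$; the split point $r$ is what lets the torus braid $(r,-r)$ coming from the twist be absorbed into $(r,q)$ to give $(r,-(r-q))$. Finally, Lemma \ref{l:cjg} plays no role in the paper's proof of this lemma; the cyclic shifts you envision are replaced by pulling blocks down and third Reidemeister moves.
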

\begin{figure}[tbh]
\includegraphics{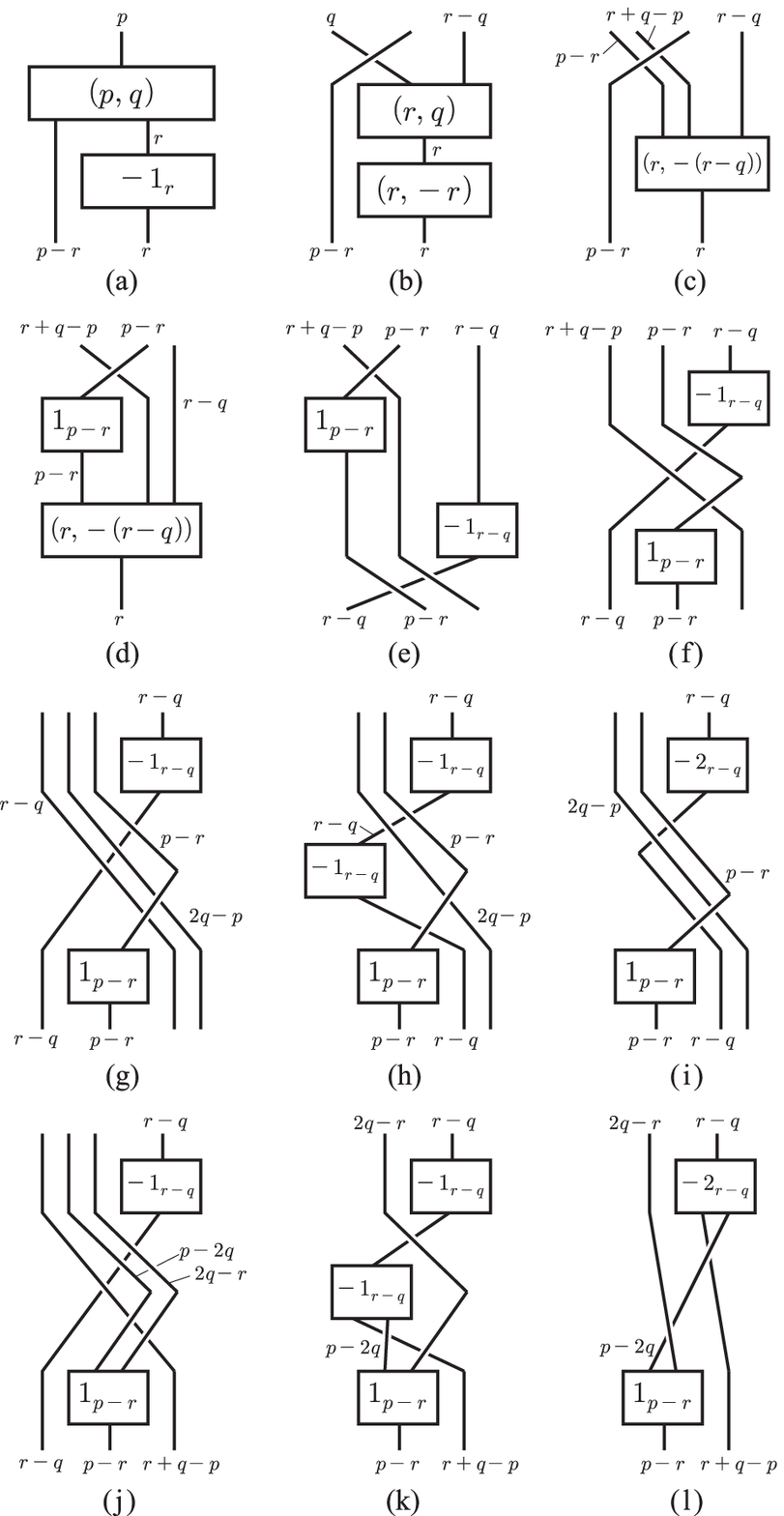}\caption{Braids} \label{f:f}
\end{figure}
\begin{proof}
The twisted torus knot $T(p,q,r,-1)$ can be obtained by closing the braid in Figure \ref{f:f}(a). One can see that the closures of the braids in Figure \ref{f:f}(a)$\sim$(f) are equivalent knots as follows:
\begin{itemize}
  \item From (a) to (b): Apply the upper left isotopy in Figure \ref{f:b}(a) with letting $(j,k,\ell)=(p,q,r)$.
  \item From (b) to (c): Split the family of $q$ parallel strands into two families, one containing $p-r$ parallel strands and the other containing $r+q-p$ parallel strands. Note that $p-r\ge 0$ and $r+q-p\ge 0$. Combine the two torus braids $(r,q)$ and $(r,-r)$ to obtain $(r,-(r-q))$.
  \item From (c) to (d): Apply a generalized destabilization.
  \item From (d) to (e): Apply the lower left isotopy in Figure \ref{f:b}(b) with letting $(j,k)=(r,r-q)$.
  \item From (e) to (f): Pull down the full twist $1_{p-r}$ and then apply a third Reidemeister move.
\end{itemize}

Suppose $2q>p$. One can see that the closures of the braids in Figure \ref{f:f}(f)$\sim$(i) are equivalent knots as follows:
\begin{itemize}
  \item From (f) to (g): Split the family of $r+q-p$ parallel strands into two families, one containing $r-q$ parallel strands and the other $2q-p$ parallel strands. Note that $r-q\ge 0$ and $2q-p\ge 0$.
  \item From (g) to (h): Apply a generalized destabilization.
  \item From (h) to (i): Combine the two negative full twists on $r-q$ strands into $-2_{r-q}$.
  \end{itemize}

Suppose $r\le 2q\le p$. One can see that the closures of the braids in Figure \ref{f:f}(f),(j)$\sim$(l) are equivalent knots as follows:
\begin{itemize}
  \item From (f) to (j): Split the family of $p-r$ parallel strands into two families, one containing $p-2q$ parallel strands and the other $2q-r$ parallel strands. Note that $p-2q\ge 0$ and $2q-r\ge 0$.
  \item From (j) to (k): Note that $(p-2q)+(r+q-p)=r-q$. Apply a generalized destabilization.
  \item From (k) to (l): Combine the two negative full twists on $r-q$ strands into $-2_{r-q}$.
  \end{itemize}

This completes the proof.
\end{proof}

\begin{lemma}\label{l:idc}
Let $m,n$ be positive integers. Let $\beta_k$ denote the braid in Figure \ref{f:h}(a), where $k$ is an integer with $0\le k\le n$. Then the closure of $\beta_k$ is the torus knot $T(mn+n+1,m+1)$ for any $k$.
\end{lemma}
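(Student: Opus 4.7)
The plan is to prove the invariance of the closure under changes of $k$ by showing that the closures of $\beta_k$ and $\beta_{k+1}$ coincide for every $0\le k<n$, and then to identify the closure of one specific $\beta_k$ as the torus knot $T(mn+n+1,m+1)$. This will be carried out by an induction on $k$, with the base case chosen at whichever endpoint makes the braid most transparent.

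For the base case I would pick whichever of $k=0$ or $k=n$ pushes the extra local twisting to one end of the braid, so that the two remaining torus-braid blocks can be amalgamated via the elementary identity $(j,\ell_1)\cdot(j,\ell_2)=(j,\ell_1+\ell_2)$ from the discussion at the start of Section~2 (and its barred analogue, used together with Lemma~\ref{l:mrm} if mirror conventions collide). After this amalgamation, one or two applications of generalized destabilization (Lemma~\ref{l:dst}) should collapse the diagram to a single torus braid whose closure is, by inspection, $T(mn+n+1,m+1)$.

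For the inductive step, the goal is to migrate the local feature of $\beta_k$ at position $k$ to position $k+1$. The natural tools are cyclic conjugation of the braid word (Lemma~\ref{l:cjg}), used to rotate the local feature around the braid axis, combined with the upper/lower isotopies of Lemma~\ref{l:tbt} (which allow one to interchange the orderings of adjacent torus-braid blocks next to a bundle of parallel strands) and the three-braid isotopy of Lemma~\ref{l:fst}. Schematically, one conjugates $\beta_k$ so that the blocks surrounding the $k$th twist are brought to a form where Lemma~\ref{l:tbt} applies, performs the isotopy, and then conjugates back; the result should be $\beta_{k+1}$ on the nose.

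I expect the main obstacle to be bookkeeping rather than any deep step: the braids of Figure~\ref{f:h}(a) are built from several torus-braid blocks glued along bundles of parallel strands, and every isotopy or cyclic rotation shuffles both the exponents and the positions of the twisting regions. Verifying that after conjugating and applying Lemma~\ref{l:tbt} one really obtains $\beta_{k+1}$, rather than a superficially similar braid with a different strand grouping, will require carefully tracking strand counts throughout. The numerical hypothesis $0\le k\le n$ should turn out to be precisely the range over which the parameter inequalities $k\le\ell<j$ in Lemma~\ref{l:tbt} remain valid at every step of the induction.
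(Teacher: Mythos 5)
Your high-level skeleton---show that adjacent $\beta_k$'s have equivalent closures and identify the braid at one endpoint of the range directly---is exactly the paper's strategy (the paper runs the induction downward, passing from $\beta_k$ to $\beta_{k-1}$ and reading off $\beta_0$ as the torus braid for $T(mn+n+1,m+1)$). But the mechanism you propose for the inductive step has a structural flaw, and the step itself is only gestured at rather than carried out. You plan to conjugate $\beta_k$, apply the block-interchange isotopies of Lemma \ref{l:tbt} (together with Lemma \ref{l:fst}), and conjugate back so that ``the result should be $\beta_{k+1}$ on the nose.'' Conjugation and the isotopies of Lemma \ref{l:tbt} all preserve the number of strands, whereas the braids $\beta_k$ of Figure \ref{f:h}(a) for different $k$ are braids on different numbers of strands: the paper's passage from $\beta_k$ to $\beta_{k-1}$ necessarily goes through a \emph{generalized destabilization} (Lemma \ref{l:dst}), which changes the braid index and only preserves the closure, not the braid. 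So no amount of conjugating and re-ordering blocks can produce $\beta_{k\pm 1}$ ``on the nose''; the destabilization is the essential ingredient of the inductive step, and your toolkit reserves it for the base case only. Relatedly, Lemma \ref{l:cjg} is not used at all in the paper's proof of this lemma, and your guess that the hypothesis $0\le k\le n$ is forced by the inequalities $k\le\ell<j$ of Lemma \ref{l:tbt} does not match the actual argument, which invokes only the $k=1$ special case of that lemma (the isotopy of Figure \ref{f:b}(c)) together with Lemma \ref{l:twb}, Reidemeister moves, a generalized destabilization, Lemma \ref{l:fst}, and an amalgamation of full twists $1_{m+1}\cdot(n-k)_{m+1}=(n-k+1)_{m+1}$.

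The deeper issue is that for a lemma of this kind the entire mathematical content \emph{is} the explicit chain of diagram moves, with strand counts tracked at every stage; you acknowledge this yourself when you flag the bookkeeping as the main obstacle. As written, the proposal defers precisely that content, so there is nothing to check: the claim that the migration of the local feature from position $k$ to $k+1$ succeeds is asserted, not demonstrated. To repair the argument you would need to (i) add a generalized destabilization to the inductive step, and (ii) exhibit the intermediate braids explicitly (as the paper does in Figure \ref{f:h}(a)$\sim$(g)), verifying at each stage how the bundles of parallel strands are split and recombined.
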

\begin{figure}[tbh]
\includegraphics{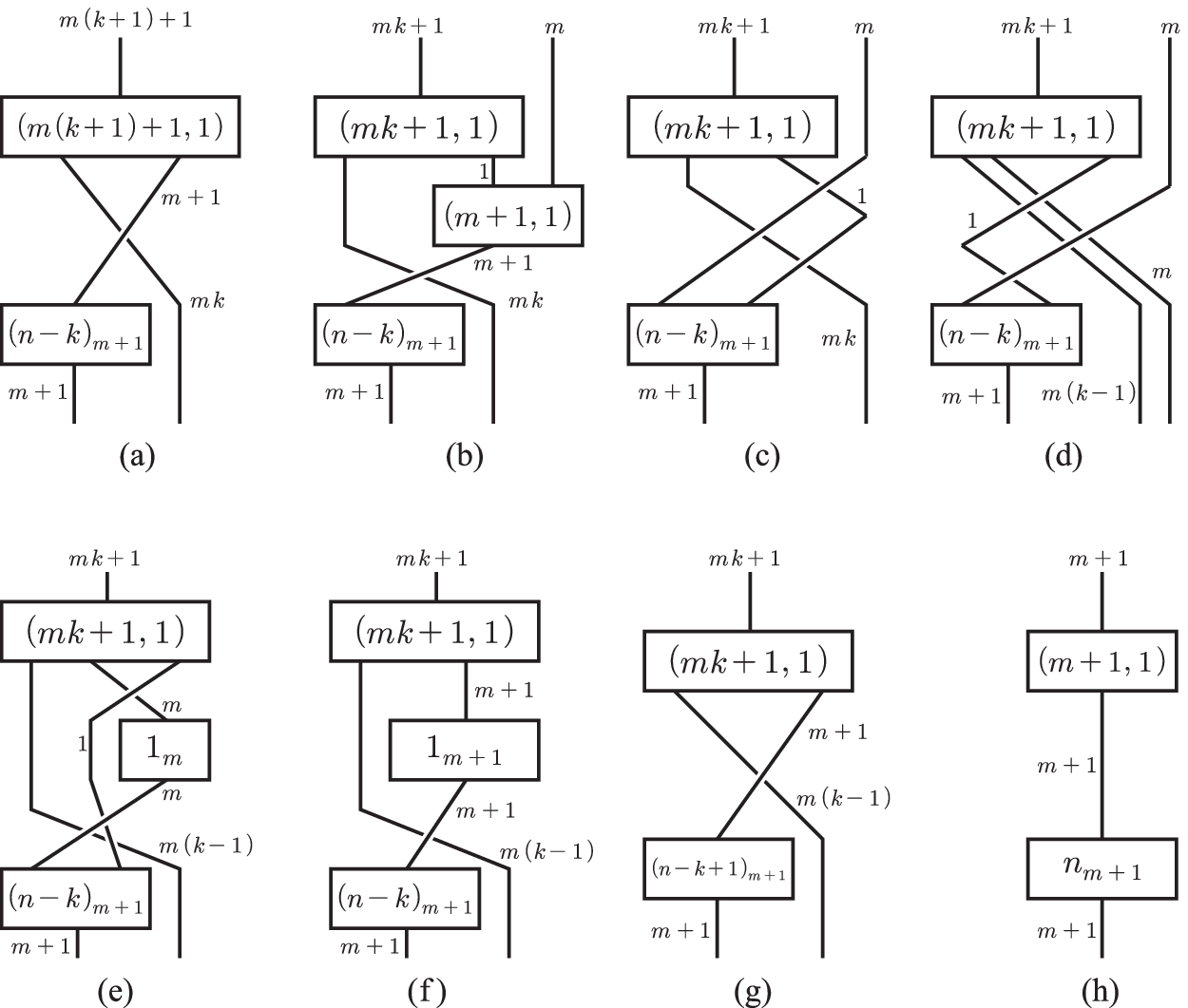}\caption{Braids} \label{f:h}
\end{figure}
\begin{proof}
One can see that the closures of the braids in Figure \ref{f:h}(a)$\sim$(g) are equivalent knots as follows:
\begin{itemize}
  \item From (a) to (b): Apply the left isotopy in Figure \ref{f:b}(c) with letting $(j,\ell)=(m(k+1)+1,m+1)$.
  \item From (b) to (c): Apply Lemma \ref{l:twb}.
  \item From (c) to (d): Apply a third Reidemeister move and split the family of $mk$ parallel strands into two families, one containing $m$ parallel strands and the other $m(k-1)$ parallel strands.
  \item From (d) to (e): Apply a generalized destabilization and a third Reidemeister move.
  \item From (e) to (f): Apply Lemma \ref{l:fst}.
  \item From (f) to (g): Combine the braids $1_{m+1}$ and $(n-k)_{m+1}$ to obtain $(n-k+1)_{m+1}$.
\end{itemize}
Noting that the braid in Figure \ref{f:h}(g) is $\beta_{k-1}$, one sees that the closures of braids $\beta_k(k=0,1,\ldots,n)$ are the same knot. In particular, it is clear that $\beta_0$ is the braid in Figure \ref{f:h}(h) and its closure is the torus knot $T(mn+n+1,m+1)$.
\end{proof}

\begin{lemma}\label{l:cun}
Let $m,n,\varepsilon$ be integers such that $m,n$ are positive and $\varepsilon=\pm 1$. Let $\alpha$ be a braid on $m-1$ strands. Let $\beta(\varepsilon,\alpha)$ and $\gamma(\varepsilon,\alpha)$ denote the braids in Figure \ref{f:l}(a) and (b), respectively. Then the closures of these braids are equivalent knots or links.
\end{lemma}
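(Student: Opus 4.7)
The plan is to show the equivalence of the closures of $\beta(\varepsilon,\alpha)$ and $\gamma(\varepsilon,\alpha)$ by a finite sequence of local braid moves, treating $\alpha$ as an opaque sub-braid that is carried along unchanged (or at worst cyclically repositioned). Since $\alpha$ is arbitrary, every step must be local enough to leave $\alpha$ intact.

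First, I would identify the standard building blocks surrounding $\alpha$ in Figure \ref{f:l}(a): torus braids of the form $(k,\pm\ell)$, their overlined versions, and full twists $\pm\ell_k$, $\overline{\ell_k}$. The initial move is to rearrange these blocks around $\alpha$ using the isotopies in Lemma \ref{l:tbt} (Figure \ref{f:b}), so that the portion of the braid outside the $m-1$ strands carrying $\alpha$ has the standard shape required to apply a generalized (de)stabilization.

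Next, I would split one or more families of parallel strands — for instance, peeling off a single strand (or a block of $m$ strands) from a larger parallel family — so that an outer strand/block threads through a torus braid in precisely the pattern of Figure \ref{f:d}. Then Lemma \ref{l:dst} allows me to remove it by a generalized destabilization, bringing the braid into the shape of $\gamma(\varepsilon,\alpha)$. If $\alpha$ ends up on the wrong side of a block after the destabilization, I would use Lemma \ref{l:cjg} to cyclically permute the factors and restore it to its target position.

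The main obstacle is that the argument must handle both signs $\varepsilon=\pm 1$ in parallel. I expect to reduce the case $\varepsilon=-1$ to $\varepsilon=+1$ using Lemma \ref{l:mrm}, which identifies the mirror image of $\ell_k$ with $-\ell_k$, together with Lemma \ref{l:twb}, which gives the two pictorial normal forms for the one-crossing torus braids $(k+1,\pm 1)$. The delicate point is to ensure that the destabilizations and isotopies act only on strands disjoint from $\alpha$, so that $\alpha$ passes through each move unaltered; this should be visible directly from Figure \ref{f:l}, where $\alpha$ is confined to a fixed block of $m-1$ strands throughout the transformation.
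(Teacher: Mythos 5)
Your proposal is missing the central idea of the paper's argument: an induction (iteration) over an auxiliary index $k$. The braid $\beta(\varepsilon,\alpha)$ of Figure \ref{f:l}(a) lives on a number of strands that grows with $n$ (the relevant strand counts in the paper's intermediate pictures are of the form $m(k-1)+\varepsilon$ together with blocks of $m-1$ strands and accumulated full twists $(n-k)_{m-1}$), while $\gamma(\varepsilon,\alpha)$ is a much smaller braid. The paper bridges this gap by defining a one-parameter family $\beta_k(\varepsilon,\alpha)$, $1\le k\le n$, with $\beta_n=\beta$, and proving that the closure of $\beta_k$ equals that of $\beta_{k-1}$; each round uses Lemma \ref{l:twb}, Reidemeister moves, a splitting of a parallel family, a generalized destabilization, an ordinary destabilization, and Lemma \ref{l:cjg} to re-close the loop. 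Your plan of ``rearrange the blocks, split a family, destabilize once (or twice), then conjugate'' is a single bounded pass; it cannot remove the $\Theta(mn)$ excess strands, and you give no mechanism for returning the braid to a form on which the same moves can be repeated. That recursive normal form $\beta_k\mapsto\beta_{k-1}$ is the whole content of the lemma, and it is absent from your sketch.

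A secondary problem is your treatment of the sign $\varepsilon$. You propose to reduce $\varepsilon=-1$ to $\varepsilon=+1$ by taking mirror images via Lemmas \ref{l:mrm} and \ref{l:twb}. Mirroring the entire braid reverses the signs of \emph{all} the full twists and replaces $\alpha$ by its mirror, so it does not carry the $\varepsilon=-1$ instance of the lemma to the $\varepsilon=+1$ instance with the same data; the two signs are genuinely different configurations, not mirror-related. The paper instead runs the induction uniformly in $\varepsilon$ and only distinguishes the two cases at the base step $k=1$: for $\varepsilon=-1$ one has $\beta_1(\varepsilon,\alpha)=\gamma(\varepsilon,\alpha)$ on the nose, whereas for $\varepsilon=+1$ an additional short sequence of destabilizations (Figure \ref{f:k}) is required. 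You would need to supply both the inductive step and this case analysis at the bottom of the induction for the argument to close.
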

\begin{figure}[tbh]
\includegraphics{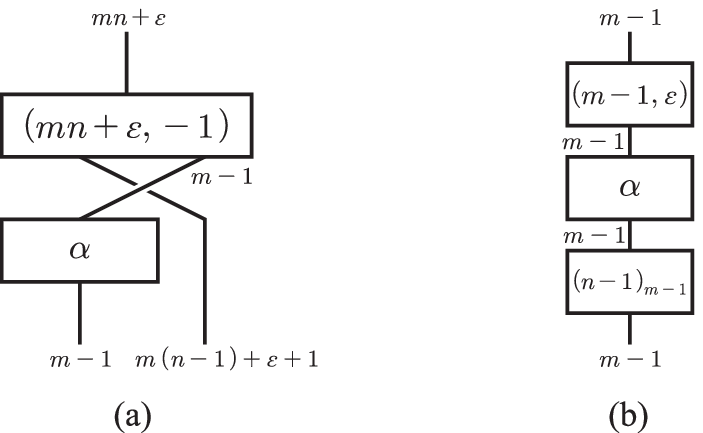}\caption{Braids} \label{f:l}
\end{figure}
\begin{proof}
For an integer $k(1\le k\le n)$, let $\beta_k(\varepsilon,\alpha)$ denote the braid in Figure \ref{f:i}(a). In particular, $\beta_n(\varepsilon,\alpha)=\beta(\varepsilon,\alpha)$.
\begin{figure}[tbh]
\includegraphics{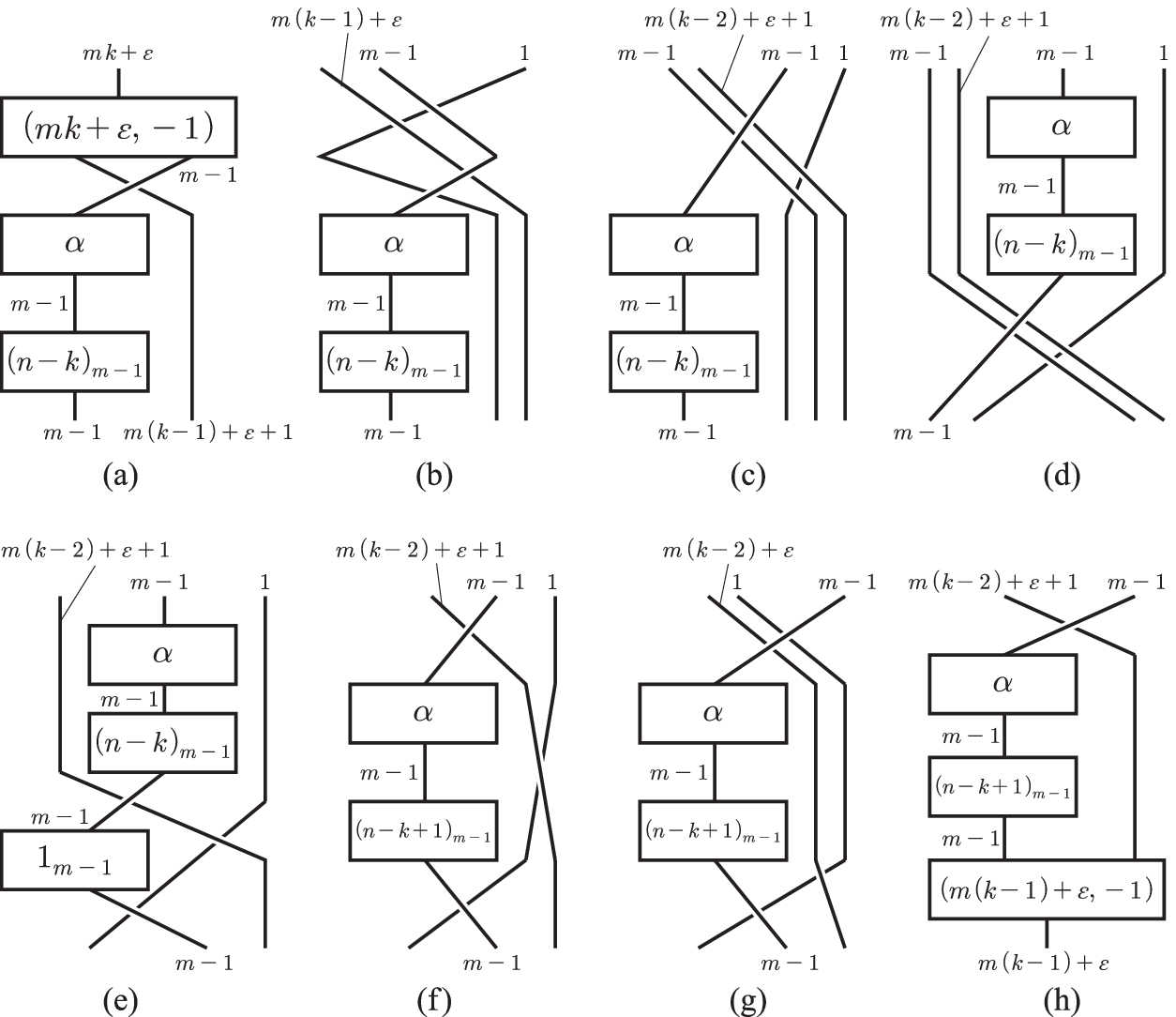}\caption{Braids} \label{f:i}
\end{figure}

One can see that the closures of the braids in Figure \ref{f:h}(a)$\sim$(g) are equivalent knots or links as follows:
\begin{itemize}
  \item From (a) to (b): Apply Lemma \ref{l:twb}.
  \item From (b) to (c): Apply a third Reidemeister move and then a second Reidemeister move.  Split the family of $m(k-1)+\varepsilon$ parallel strands into two families, one containing $m-1$ parallel strands and the other $m(k-2)+\varepsilon+1$ parallel strands.
  \item From (c) to (d): Pull up the braids $\alpha$ and $(n-k)_{m-1}$.
  \item From (d) to (e): Apply a generalized destabilization.
  \item From (e) to (f): Pull down the braids $\alpha$ and $(n-k)_{m-1}$ and then combine the braids $(n-k)_{m-1}$ and $1_{m-1}$ to obtain $(n-k+1)_{m-1}$.
  \item From (f) to (g): Destabilize the braid in Figure \ref{f:i}(f).
  \item From (g) to (h): Apply Lemma \ref{l:twb}.
\end{itemize}
One sees that the closure of the braid in Figure \ref{f:i}(h) is equivalent to that of $\beta_{k-1}(\varepsilon,\alpha)$ by Lemma \ref{l:cjg}. Thus the closures of the braids $\beta_k(\varepsilon,\alpha)$ are equivalent knots or links for all $k=1,\ldots, n$.

Consider the braid $\beta_1(\varepsilon,\alpha)$. If $\varepsilon=-1$, then one easily sees that $\beta_1(\varepsilon,\alpha)=\gamma(\varepsilon,\alpha).$ Suppose $\varepsilon=1$. Then $\beta_1(\varepsilon,\alpha)$ is the braid in Figure \ref{f:k}(a). One can see that the closures of the braids in Figure \ref{f:k}(a)$\sim$(e) are equivalent knots or links as follows:
\begin{itemize}
  \item From (a) to (b): Apply Lemma \ref{l:twb}.
  \item From (b) to (c): Pull to the left a family of $m-1$ parallel strands.
  \item From (c) to (d): Destabilize and split the family of $m-1$ parallel strands into a family of $m-2$ parallel strands and a single strand.
  \item From (d) to (e): Destabilize.
\end{itemize}
\begin{figure}[tbh]
\includegraphics{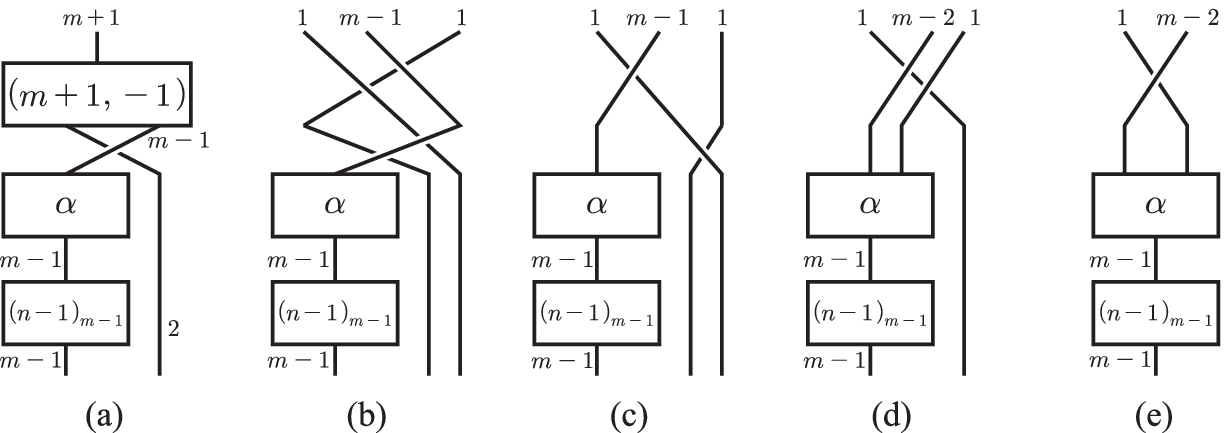}\caption{Braids} \label{f:k}
\end{figure}
The braid in Figure \ref{f:k}(e) is the braid $\gamma(\varepsilon,\alpha)$ with $\varepsilon=1$ by Lemma \ref{l:twb}.
\end{proof}

\section{Proof of Theorem \ref{t:main}}
In this section, we prove Theorem \ref{t:main}.

(1) Consider the twisted torus knot $T(mn+m+1, mn+1, mn, -1)$. It is the closure of the braid in Figure \ref{f:j}(a). The torus braid $(mn+m+1, mn+1)$ splits into two torus braids $(mn+m+1, 1)$ and $(mn+m+1, mn)$ as shown in Figure \ref{f:j}(b). The lower left isotopy in Figure \ref{f:b}(a) with $(j,k)=(mn+m+1,mn)$ yields the braid in Figure \ref{f:j}(c). The braids $1_{mn}$ and $-1_{mn}$ are merged into a trivial braid on $mn$ strands, so we get the braid in Figure \ref{f:j}(d), which is $\beta_n$ in Lemma \ref{l:idc}. Hence $T(mn+m+1, mn+1, mn, -1)=T(mn+n+1,m+1)$ by Lemma \ref{l:idc}.

\begin{figure}[tbh]
\includegraphics{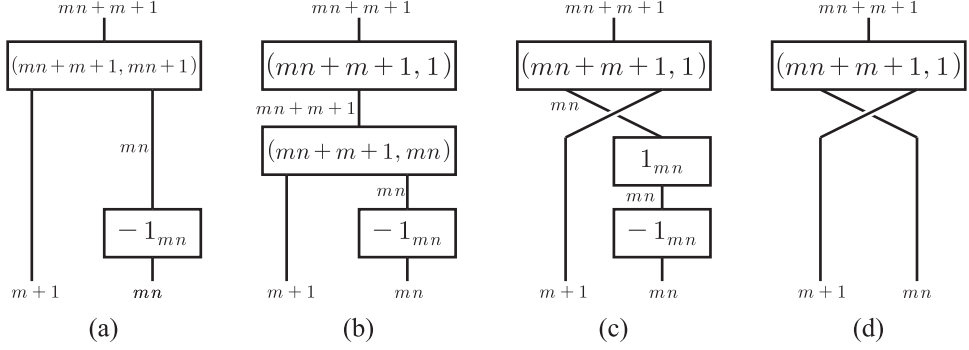}\caption{Braid isotopy} \label{f:j}
\end{figure}

(2) Let $p=mn+m+1, q=mn+1, r=mn+m$.  Then $p\ge r\ge q, r+q\ge p$ and $2q>p$. By Lemma \ref{l:pqr}(1) $T(mn+m+1, mn+1, mn+m, -1)$ is obtained by closing the braid in Figure \ref{f:f}(i).
Note that
\begin{equation*}
\begin{split}
r-q&=m-1,\\
p-r&=1,\\
(2q-p)+(p-r)&=m(n-1)+2,\\
(r-q)+(2q-p)&=mn.
\end{split}
\end{equation*}
Hence by using Lemma \ref{l:mrm}, one can see that the braid in Figure \ref{f:f}(i) is the mirror image of the braid $\beta(1,2_{m-1})$ in Lemma \ref{l:cun} after a $\pi$-rotation. It is easy to see that the braid $\gamma(1,2_{m-1})$ in the lemma is closed to be the torus knot $T(m-1, mn+m-n)$. Thus $T(mn+m+1, mn+1, mn+m, -1)$ is the mirror image of $T(m-1, mn+m-n)$ by Lemma \ref{l:cun}, i.e., $T(mn+m+1, mn+1, mn+m, -1)=T(mn+m-n,-m+1)$.\\

(3) Let $p=mn+m+1,q=mn+1,r=mn+2$. Then $p\ge r\ge q, r+q\ge p$ and $2q>p$. By Lemma \ref{l:pqr}(1) $T(mn+m+1, mn+1, mn+2, -1)$ is obtained by closing the braid in Figure \ref{f:f}(i).
Note that
\begin{equation*}
\begin{split}
r-q&=1,\\
p-r&=m-1,\\
(2q-p)+(p-r)&=mn,\\
(r-q)+(2q-p)&=m(n-1)+2.
\end{split}
\end{equation*}
Hence one can see that the braid in Figure \ref{f:f}(i) is the braid $\beta(1,1_{m-1})$ in Lemma \ref{l:cun}. It is easy to see that the braid $\gamma(1,1_{m-1})$ in the lemma is closed to be the torus knot $T(m-1, mn-n+1)$. Thus $T(mn+m+1, mn+1, mn+2, -1)=T(m-1, mn-n+1)=T(mn-n+1,m-1)$ by Lemma \ref{l:cun}.\\

(4) Let $p=mn+m-1,q=mn-1,r=mn+m-2$, where $mn\ge 2$. Then $p\ge r\ge q, r+q\ge p$ and $2q>p$ except when $(m,n)=(1,2)$ or $n=1$: in the former, both of $T(p,q,r,-1)=T(2,1,1,-1)$ and $T(mn+m-n-2,-m+1)=T(-1,0)$ are the unknot, and in the latter, $T(p,q,r,-1)=T(2m-1,m-1,2m-2,-1)$ is the torus knot $T(mn+m-n-2,-m+1)=T(2m-3,-m+1)$ by \cite[Theorem 1]{cable}. By Lemma \ref{l:pqr}(1) $T(mn+m+1, mn+1, mn+2, -1)$ is obtained by closing the braid in Figure \ref{f:f}(i).
Note that
\begin{equation*}
\begin{split}
r-q&=m-1,\\
p-r&=1,\\
(2q-p)+(p-r)&=m(n-1),\\
(r-q)+(2q-p)&=mn-2.
\end{split}
\end{equation*}
Hence by using Lemma \ref{l:mrm}, one can see that the braid in Figure \ref{f:f}(i) is the mirror image of the braid $\beta(-1,2_{m-1})$ in Lemma \ref{l:cun} after a $\pi$-rotation. It is easy to see that the braid $\gamma(-1,2_{m-1})$ in the lemma is closed to be the torus knot $T(m-1, mn+m-n-2)$. Thus $T(mn+m-1,mn-1,mn+m-2,-1)=T(mn+m-n-2,-m+1)$ by Lemma \ref{l:cun}.\\

(5) Let $p=mn+m-1,q=mn-1,r=mn$, where $mn\ge 2$. Then $p\ge r\ge q, r+q\ge p$ and $2q>p$ except when $(m,n)=(1,2)$ or $n=1$: in the former, both of $T(p,q,r,-1)=T(2,1,2,-1)$ and $T(mn-n-1,m-1)=T(-1,0)$ are the unknot, and in the latter, $T(p,q,r,-1)=T(2m-1,m-1,m,-1)$ is the torus knot $T(mn-n-1,m-1)=T(m-2,m-1)$ by \cite[Theorem 1.2( 2-ii)]{torusTwistedtorusknots}(note that $2m-1=(m-2)f_3+f_4,m-1=(m-2)f_1+f_2$, and $m=(m-2)f_2+f_3$, where $f_i$ are Fibonacci numbers with $f_1=f_2=1$). By Lemma \ref{l:pqr}(1) $T(mn+m-1,mn-1,mn,-1)$ is obtained by closing the braid in Figure \ref{f:f}(i).
Note that
\begin{equation*}
\begin{split}
r-q&=1,\\
p-r&=m-1,\\
(2q-p)+(p-r)&=mn-2,\\
(r-q)+(2q-p)&=m(n-1).
\end{split}
\end{equation*}
Hence one can see that the braid in Figure \ref{f:f}(i) is the braid $\beta(-1,1_{m-1})$ in Lemma \ref{l:cun}. It is easy to see that the braid $\gamma(-1,1_{m-1})$ in the lemma is closed to be the torus knot $T(m-1,mn-n-1)$. Thus $T(mn+m-1,mn-1,mn,-1)=T(mn-n-1,m-1)$ by Lemma \ref{l:cun}.\\

(6) Let $p=2n+1,q=n,r=2n-1$. Then $p\ge r\ge q, r+q\ge p$ and $p\ge 2q\ge r$ except when $n=1$: in this case, both of $T(p,q,r,-1)=T(3,1,1,-1)$ and $T(2n-3,-n+1)=T(-1,0)$ are the unknot. By Lemma \ref{l:pqr}(2) $T(2n+1,n,2n-1,-1)$ is obtained by closing the braid in Figure \ref{f:f}(l). This braid is the first braid in Figure \ref{f:m}. We obtain the second braid in the figure after a second Redemeister move and then the third braid after a destabilization. By using \cite[Lemma 2.4]{torusTwistedtorusknots}, Lemma \ref{l:cjg}, and the fact that a full twist commutes with any braid, one easily sees that the closure of the third braid is the torus knot $T(n-1,-2n+3)$. Thus $T(2n+1,n,2n-1,-1)=T(2n-3,-n+1)$.
\begin{figure}[tbh]
\includegraphics{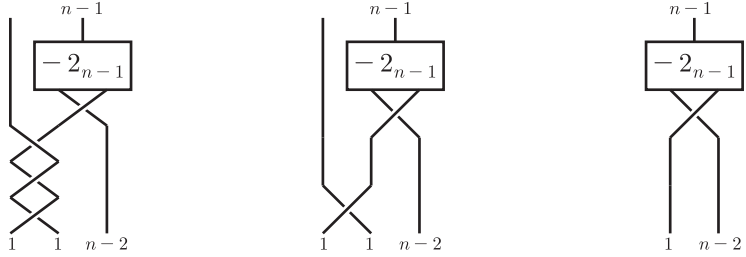}\caption{Braid isotopy} \label{f:m}
\end{figure}\\

(7) Consider the twisted torus knot $T(3n-1,n,n+1,-1)$. If $n=1$, then $T(3n-1,n,n+1,-1)=T(2,1,2,-1)$ and $T(2n-1,n-1)=T(1,0)$ are both the unknot. Hence we may assume $n\ge 2$. The knot $T(3n-1,n,n+1,-1)$ is the closure of the braid in Figure \ref{f:n}(a).
\begin{figure}[tbh]
\includegraphics{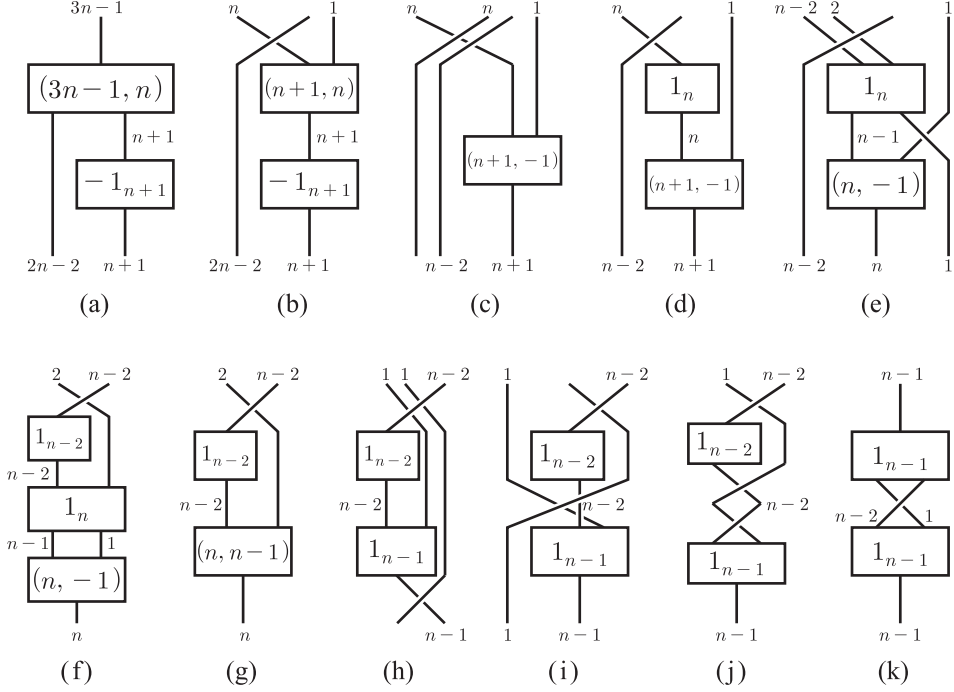}\caption{The closures of these braids are the same knot.} \label{f:n}
\end{figure}
One can see that the closures of the braids in Figure \ref{f:n}(a)$\sim$(k) are equivalent knots as follows:
\begin{itemize}
  \item From (a) to (b): Apply the upper left isotopy in Figure \ref{f:b}(a) with letting $(j,k,\ell)=(3n-1,n,n+1)$.
  \item From (b) to (c): Split the family of $2n-2$ parallel strands into two families, one containing $n$ parallel strands and the other $n-2$ parallel strands.
  \item From (c) to (d): Apply a generalized destabilization.
  \item From (d) to (e): Split a family of $n$ parallel strands into two families, one containing $n-2$ parallel strands and the other $2$ parallel strands. Apply the right isotopy in Figure \ref{f:b}(d) to the braid $(n+1,-1)$ with letting $(j,\ell)=(n+1,n)$.
  \item From (e) to (f): Apply a generalized destabilization to the left of the braid in Figure \ref{f:n}(e) and a destabilization to the right.
  \item From (f) to (g): Combine the braids $1_n$ and $(n,-1)$.
  \item From (g) to (h): Split the family of $2$ parallel strands into two single strands and apply the lower right isotopy in Figure \ref{f:b}(a) to the braid $(n,n-1)$ with letting $(j,k)=(n,n-1)$.
  \item From (h) to (i): Pull the two single strands to the left.
  \item From (i) to (j): Destabilize the braid in Figure \ref{f:n}(i).
  \item From (j) to (k): Use Lemma \ref{l:fst}.
\end{itemize}
One easily sees that the closure of the braid in Figure \ref{f:n}(k) is the torus knot $T(n-1,2n-1)$. Thus $T(3n-1,n,n+1,-1)=T(2n-1,n-1)$.\\

(8) Consider the twisted torus knot $T(3n+1,n,3n-1,-1)$. If $n=1$, then $T(3n+1,n,3n-1,-1)=T(4,1,2,-1)$ and $T(3n-2,-2n+1)=T(1,-1)$ are both the unknot. Hence we may assume $n\ge 2$. The knot $T(3n+1,n,3n-1,-1)$ is the closure of the braid in Figure \ref{f:o}(a).
\begin{figure}[tbh]
\includegraphics{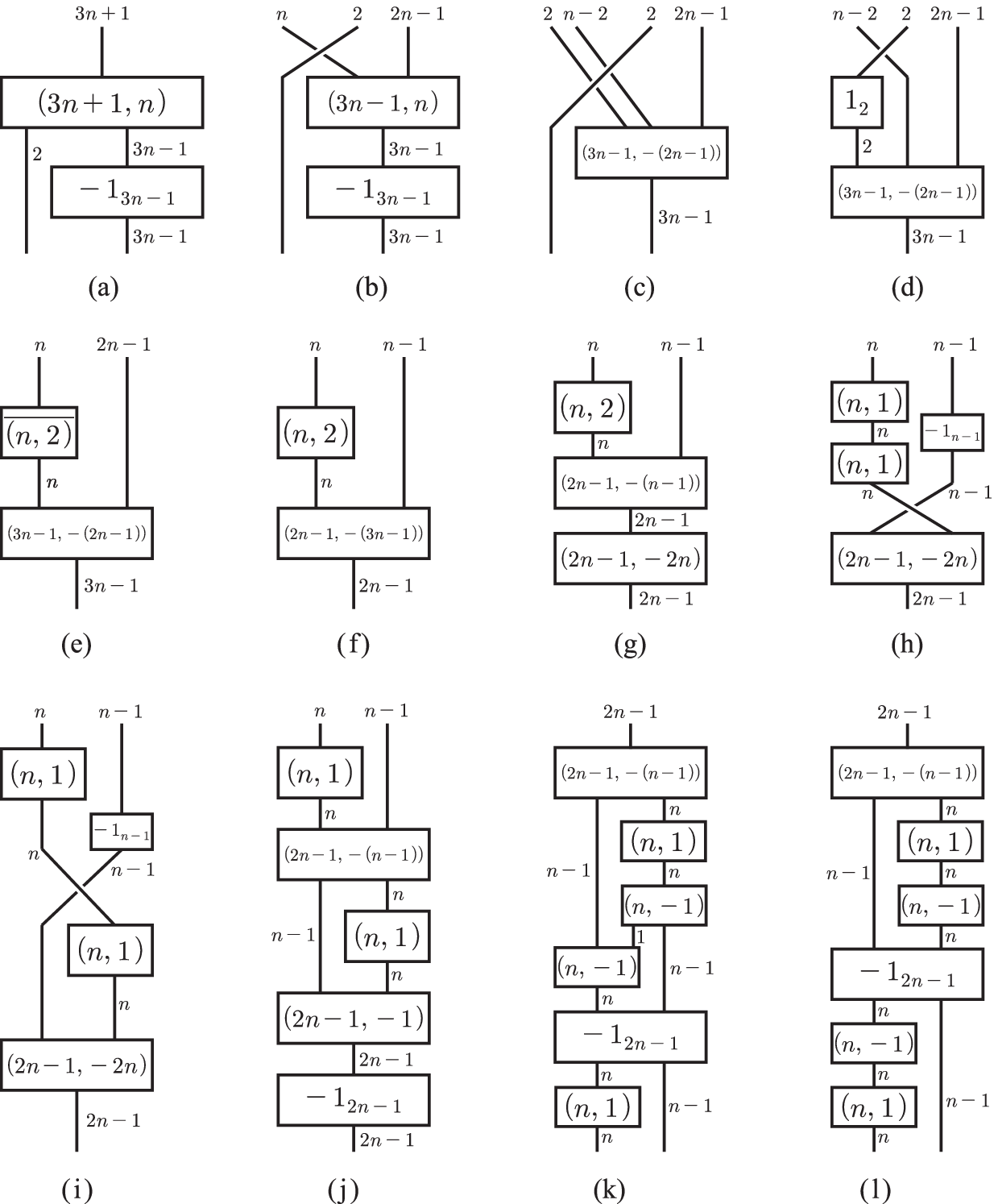}\caption{The closures of these braids are the same knot.} \label{f:o}
\end{figure}
One can see that the closures of the braids in Figure \ref{f:n}(a)$\sim$(k) are equivalent knots as follows:
\begin{itemize}
  \item From (a) to (b): Apply the upper left isotopy in Figure \ref{f:b}(a) with letting $(j,k,\ell)=(3n+1,n,3n-1)$.
  \item From (b) to (c): Combine the braids $(3n-1,n)$ and $-1_{3n-1}$, and split the family of $n$ parallel strands into two families, one containing $2$ parallel strands and the other $n-2$ parallel strands.
  \item From (c) to (d): Apply a generalized destabilization.
  \item From (d) to (e): The braids $\overline{(n,2)}$ and $1_2$ are the mirror images of the braids $(n,-2)$ and $-1_2$, respectively. Thus one can see that the mirror images of the braids in (d) and (e) are isotopic by using the lower right isotopy in Figure \ref{f:b}(b) with letting $(j,k)=(n,2)$.
  \item From (e) to (f): See \cite[Lemma 2.3]{PP}.
  \item From (f) to (g): Split the torus braid $(2n-1,-(3n-1))$ into two torus braids $(2n-1,-(n-1))$ and $(2n-1,-2n)$.
  \item From (g) to (h): Split the torus braid $(n,2)$ into two of torus braids $(n,1)$, and apply the lower left isotopy in Figure \ref{f:b}(b) with letting $(j,k)=(2n-1,n-1)$.
  \item From (h) to (i): Pull the lower $(n,1)$ down.
  \item From (i) to (j): Apply the lower left isotopy in Figure \ref{f:b}(b) with letting $(j,k)=(2n-1,n-1)$, and split the torus braid $(2n-1,-2n)$ into two torus braids $(2n-1,-1)$ and $-1_{2n-1}$.
  \item From (j) to (k): By using Lemma \ref{l:cjg}, delete the upper $(n,1)$ and attach it to the bottom. Also, by applying the right isotopy in Figure \ref{f:b}(d) with letting $(j,\ell)=(2n-1,n)$, divide the torus braid $(2n-1,-1)$ into two of $(n,-1)$.
  \item From (k) to (l): Pull the lower $(n,-1)$ down through the negative full twist $-1_{2n-1}$.
\end{itemize}
Two pairs of $(n,1)$ and $(n,-1)$ in the braid in Figure \ref{f:o}(l) can be canceled. It is clear that the closure of the resulting braid is the torus knot $T(2n-1,-(3n-2))$. Thus
$T(3n+1,n,3n-1,-1)=T(3n-2,-2n+1)$.


\begin{thebibliography}{00}
\bibitem{BB} J. Birman and T. Brendle, {\em Braids: A survey}, Handbook of Knot Theory eds. W.  Menasco and M. Thistlethwaite (Elsevier, Amsterdam, 2005), pp. 19--104.
\bibitem{Bridge} Richard Sean Bowman, Scott Taylor, and Alexander Zupan, {\em Bridge spectra of twisted torus knots}, Int. Math. Res. Not. IMRN, Vol. 2015, no. 16, 7336--7356.
\bibitem{simplest} Patrick J. Callahan, John C. Dean, and Jeffrey R. Weeks, {\em The simplest hyperbolic knots}, J. Knot Theory Ramifications, \textbf{8} (1999), no. 3, 279--297.
\bibitem{generalizedtwistedtoruslinks} Abhijit Champanerkar, David Futer, Ilya Kofman, Walter Neumann, and Jessica S. Purcell, {\em Volume bounds for generalized twisted torus links}, Math. Res. Lett. \textbf{18} (2011), no.6, 1097--1120.
\bibitem{nextsimplest} Abhijit Champanerkar, Ilya Kofman, and Eric Patterson, {\em The next simplest hyperbolic knots}, J. Knot Theory Ramifications, \textbf{13} (2004), no.7, 965--987.
\bibitem{Thesis} J. C. Dean, {\em Hyperbolic knots with small Seifert-fibered Dehn surgeries}, PhD thesis, University of Texas at Austin (1996)
\bibitem{Guntel} Brandy J. Guntel, {\em Knots with distinct primitive/primitive and
              primitive/{S}eifert representatives}, J. Knot Theory Ramifications, \textbf{21} (2012) no.1, 1250015, 12pp.
\bibitem{cable} S. Lee, {\em Twisted torus knots $T(p,q;kq,s)$ are cable knots}, J. Knot Theory Ramifications \textbf{21} (2012), no. 1, 1250005, 4 pp.
\bibitem{unknotted} S. Lee, {\em Twisted torus knots that are unknotted}, Int. Math. Res. Not. IMRN, Vol. 2014, no.18, 4958--4996.
\bibitem{LeeTorusknotsobtained} S. Lee, {\em Torus knots obtained by twisting torus knots},  Algebr. Geom. Topol. \textbf{15} (2015), no. 5, 2819--2838.
\bibitem{hyperbolicity} S. Lee, {\em Satellite knots obtained by twisting torus knots:
              hyperbolicity of twisted torus knots}, Int. Math. Res. Not. IMRN, Vol. 2018, no. 3, 785--815.
\bibitem{Positively} S. Lee, {\em Positively twisted torus knots which are torus knots},  J. Knot Theory Ramifications \textbf{28} (2019), no. 3, 1950023, 13 pp.
\bibitem{torusTwistedtorusknots} S. Lee, {\em Twisted torus knots $T(p, q, p -  kq, -1)$ which are torus knots}, J. Knot Theory Ramifications \textbf{29} (2020) no. 9, 2050068, 11 pp.
\bibitem{L} S. Lee, {\em Twisted torus knots $T(mn+m+1,mn+1,mn+m+2,-1)$ and $T(n+1,n,2n-1,-1)$ are torus knots}, J. Knot Theory Ramifications \textbf{30} (2021), no. 3, 2150016, 20 pp.
\bibitem{Heegaard} Yoav Moriah and Eric Sedgwick, {\em Heegaard splittings of twisted torus knots}, Topology Appl. \textbf{156} (2009), no. 5, 885--896.
\bibitem{Rolfsen} D. Rolfsen, {\em Knots and Links}, Math. Lect. Ser. 7. Berkley, CA: Publish or Perish, 1976.
\bibitem{P} Thiago de Paiva, {\em Unexpected essential surfaces among exteriors of twisted torus knots}, arXiv:2012.09599v2 [math.GT] 11 Jan 2021.
\bibitem{PP} Thiago de Paiva and Jessica S. Purcell, {\em Satellites and Lorenz knots}, arXiv:2103.09500v1 [math.GT] 17 Mar 2021.
\bibitem{homology}Faramarz Vafaee, {\em On the knot {F}loer homology of twisted torus knots}, Int. Math. Res. Not. IMRN, Vol. 2015, no.15, 6516--6537.
\end{thebibliography}
\end{document}